\newtheorem{thm}{Theorem}[section]
\newtheorem{cor}[thm]{Corollary}
\newtheorem{lem}[thm]{Lemma}
\newtheorem{prop}[thm]{Proposition}
\theoremstyle{definition}
\numberwithin{equation}{section}
\newcommand{\QQ}{\mathbb Q}
\newcommand{\ZZ}{\mathbb Z}
\newcommand{\CC}{\mathbb C}
\newcommand{\ra}{\rightarrow}
\newcommand{\cH}{\mathcal{H}}
 \DeclareMathOperator{\Ker}{Ker}
 \DeclareMathOperator{\Ima}{{Im}}
 \DeclareMathOperator{\tr}{tr}
\DeclareMathOperator{\End}{{End}}
\begin{document}

\title[Equations for abelian subvarieties ]{ Equations for abelian subvarieties}
\author{Angel Carocca, Herbert Lange and Rub\'{\i} E. Rodr\'iguez}
\address{Departamento de Matem\'atica y Estad\'istica, Universidad de La Frontera, Avenida Francisco Salazar 01145, Casilla 54-D, Temuco, Chile.}
\email{angel.carocca@ufrontera.cl}
\address{Department Mathematik, Universit\"at Erlangen, Cauerstrasse 11, 91058 Erlangen, Germany.}
\email{lange@math.fau.de}
\address{Departamento de Matem\'atica y Estad\'istica, Universidad de La Frontera, Avenida Francisco Salazar 01145, Casilla 54-D, Temuco, Chile.}
\email{rubi.rodriguez@ufrontera.cl}

\thanks{The  authors were partially supported by grants CONICYT PAI Atracci\'on de Capital Humano Avanzado del Extranjero  PAI80160004 and Anillo ACT 1415 PIA-CONICYT}
\subjclass{14H40, 14H30}
\keywords{Abelian Varieties, Jacobians, Prym Varieties}

\date{\today }
%\dedicatory{ }%
%\commby{ }%
% ----------------------------------------------------------------
\begin{abstract}
Given a finite group $G$ and an abelian variety $A$ acted on by $G$, to any subgroup $H$ of $G$, we associate an
abelian subvariety $A_H$ on which the associated  Hecke algebra $\cH_H$ for $H$ in $G$ acts. Any 
irreducible rational representation $\widetilde W$ of $\cH_H$ induces an abelian subvariety of $A_H$ in a natural way.
In this paper we give equations for this abelian subvariety. In a special case these equations become much easier. We work out some examples.

\end{abstract}

\maketitle

\section{Introduction}

Let $A$ be a complex abelian variety acted on by a finite group $G$. This induces an algebra homomorphism of the rational 
group ring $\QQ[G]$  into the rational endomorphism ring $\End_\QQ(A) = \End(A) \otimes \QQ$,
$$
\rho: \QQ[G] \ra \End_\QQ(A).
$$
For any element $\alpha \in \QQ[G]$ we define its  image in $A$ by
$$
\Ima(\alpha) := \Ima(n \alpha)
$$
where $n$ is any positive integer such that $n \alpha$ is an endomorphism. Since multiplication by a non-zero integer on $A$ is an isogeny, the definition does not depend on the chosen integer $n$.

Now for any subgroup $H$ of $G$ the element $p_H := \frac{1}{|H|} \sum_{h \in H} h$ is an idempotent in $\QQ[G]$ which defines an abelian subvariety of $A$,
$$
A_H := \Ima(p_H).
$$ 
The action of $G$ on $A$ induces an action of the Hecke algebra $\cH_H$ on the abelian subvariety $A_H$. The aim of this 
note is to study this action and use it to find equations for the abelian subvarieties $A_{H,\widetilde W}$ of  $A_H$ which are  given by the rational representations $\widetilde W$ of $\cH_H$.  Here \lq \lq equation\rq\rq means 
to express $A_H$ as the connected component containing 0 of the zero-set of an endomorphism of $A_H$. Since this is fairly
complicated, we do not repeat the result here, but refer to Theorem \ref{thm1} below. However, we get an easy and important consequence: If the group $G$ acts on an abelian variety $A$ and $H$ is any subgroup of $G$, then Corollary 
\ref{cor3.3} describes the complement of the  abelian  subvariety $A_G$ in $A_H$. This generalizes  \cite[Corollary 3.5, p.10]{rr} where it is proven for the Prym varieties $P(C/H, C/G)$ for any curve $C$  acted on by $G$ and any subgroup
$H$ of $G$.

In a special case the result becomes much simpler, namely let $V$ be an irreducible complex representation with
$\dim V^H =1$ and rational character. To this a one-dimensional rational representation $\widetilde W$ of the Hecke algebra 
$\cH_H$  is associated in a natural way and one can find an explicit basis $q_1, \dots ,q_s$ of the algebra $\cH_H$ for which we have (see Theorem \ref{thm4.3}) for the isotypical component $A_{H,\widetilde W}$ corresponding to 
$\widetilde W$ in $A_H$,

\begin{thm} 
With these assumptions the abelian subvariety $A_{H,\widetilde W}$ is 
$$
A_{H,\widetilde W} = \{z \in A_H \;|\;  q_i(z) = \chi_V(q_i) z \ \textup{ for } 1 \leq i \leq  s \}_0 \, ,
$$
where the index $0$ mean the connected component containing $0$.
\end{thm}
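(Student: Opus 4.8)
The plan is to prove the equality of two abelian subvarieties of $A_H$: the isotypical component $A_{H,\widetilde W} = \Ima(e_{\widetilde W})$, where $e_{\widetilde W} \in \cH_H$ is the central idempotent associated with $\widetilde W$, and the set
$$
B := \{z \in A_H \mid q_i(z) = \chi_V(q_i)\, z \text{ for } 1 \le i \le s\}_0 .
$$
First I would record the identification of $\widetilde W$ with $\chi_V$. Every $q \in \cH_H$ satisfies $q = p_H q p_H$, so it maps $V$ into the line $V^H$ and acts there by a scalar equal to its trace on all of $V$; hence the action of $\cH_H$ on $V^H \cong \widetilde W$ is $q \mapsto \chi_V(q)$, and because $\chi_V$ is rational this scalar lies in $\QQ$. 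Writing $\beta_i := q_i - \chi_V(q_i)\, p_H \in \cH_H$ (and clearing denominators so that a suitable integer multiple is a genuine endomorphism), we have $B = (\bigcap_{i=1}^s \Ker \beta_i)_0$, and the goal becomes $B = \Ima(e_{\widetilde W})$.

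For the inclusion $A_{H,\widetilde W} \subseteq B$ I would use that $\widetilde W$ is one-dimensional over $\QQ$: its simple factor in the Wedderburn decomposition of $\cH_H$ is forced to be $\cH_H e_{\widetilde W} = \QQ\, e_{\widetilde W}$. Hence for each $i$ there is a rational scalar $c_i$ with $q_i e_{\widetilde W} = c_i e_{\widetilde W}$, and applying the algebra homomorphism $\widetilde W$ gives $c_i = \widetilde W(q_i) = \chi_V(q_i)$. Therefore $\beta_i e_{\widetilde W} = q_i e_{\widetilde W} - \chi_V(q_i) e_{\widetilde W} = 0$, so $\Ima(e_{\widetilde W}) \subseteq \Ker \beta_i$ for every $i$; since $\Ima(e_{\widetilde W})$ is connected and contains $0$, it is contained in $B$.

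The reverse inclusion is the heart of the argument, and it is short once set up correctly. Since $q_1, \dots, q_s$ is a basis of $\cH_H$, I can write $e_{\widetilde W} = \sum_{i=1}^s a_i q_i$ with $a_i \in \QQ$. Forming the same combination of the $\beta_i$ and using $\chi_V(e_{\widetilde W}) = \widetilde W(e_{\widetilde W}) = 1$ together with $p_H|_{A_H} = \mathrm{id}_{A_H}$, I get
$$
\sum_{i=1}^s a_i \beta_i = e_{\widetilde W} - \chi_V(e_{\widetilde W})\, p_H = e_{\widetilde W} - \mathrm{id}_{A_H}.
$$
As each $\beta_i$ annihilates $B$, so does this combination; thus $e_{\widetilde W}$ restricts to the identity on $B$, and therefore $B = e_{\widetilde W}(B) \subseteq \Ima(e_{\widetilde W}) = A_{H,\widetilde W}$. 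Combining the two inclusions yields the theorem; alternatively, this is what the general Theorem \ref{thm1} specializes to in the present case.

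I expect the only real friction to be notational rather than conceptual. Because the $q_i$ and the values $\chi_V(q_i)$ are merely rational, the equations must be read after clearing denominators (replace $\beta_i$ by $n\beta_i \in \End(A_H)$ for a suitable $n \in \NN$), and one must check that this alters the defining subgroup only by torsion, so that its identity component $B$ is unaffected and the computations, which take place in $\End_\QQ(A_H)$, remain valid. Granting this routine bookkeeping, the substantive inputs are just the identification $\widetilde W = \chi_V|_{\cH_H}$ and the one-dimensionality $\cH_H e_{\widetilde W} = \QQ\, e_{\widetilde W}$, which together make $e_{\widetilde W}$ a rational combination of the $q_i$ and collapse the defining equations to $e_{\widetilde W} = \mathrm{id}_{A_H}$ on $B$.
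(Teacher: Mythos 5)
Your proposal is correct, and its first half coincides with the paper's argument: the inclusion $A_{H,\widetilde W}\subseteq B$ is obtained exactly as in Lemma \ref{l4.4}, from $\cH_H f_{H,\widetilde W}=\QQ f_{H,\widetilde W}$ and the identification $\widetilde W(q)=\chi_{\widetilde V}(q)=\chi_V(q)$ of Lemma \ref{l3.7}. For the reverse inclusion you take a genuinely different, and leaner, route. The paper writes $f_{H,\widetilde W}$ in the basis $q_1,\dots,q_s$ with the \emph{explicit} coefficients of Lemma \ref{lem3.3}/Proposition \ref{l4.1} and then invokes the Hecke-algebra orthogonality relations (Lemma \ref{orth}) to verify the normalization $\dim V\sum_i |H\cap x_iHx_i^{-1}|\,\chi_V(q_i)^2=|G|$, i.e.\ that substituting $q_i(z)=\chi_V(q_i)z$ into $|G|f_{H,\widetilde W}$ returns $|G|z$. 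You instead write $f_{H,\widetilde W}=\sum_i a_iq_i$ with unspecified $a_i\in\QQ$ and get the same normalization for free from $\chi_V(f_{H,\widetilde W})=\widetilde W(f_{H,\widetilde W})=1$ by linearity of $\chi_V$; this eliminates the orthogonality relations and the double-coset counting entirely. What the paper's computation buys in exchange is the explicit equation of Theorem \ref{thm1} and Corollary \ref{c4.2}, which is what gets evaluated in the examples of Section 5, so the coefficients are needed there anyway. One small simplification you could make: no clearing of denominators is needed in the defining equations of $B$, since each $q_i$ is already an honest endomorphism of $A_H$ (Lemma \ref{l3.3}) and $\chi_V(q_i)$ is an integer (this is why the paper cites Isaacs--Navarro in Lemma \ref{l4.4}); the passage to integer multiples is only required when manipulating $f_{H,\widetilde W}$ and the combination $\sum_i a_i\beta_i$, and your bookkeeping there is the standard one.
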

Several examples for this theorem will be given.\\

Section 2 contains some preliminaries. In particular we recall from \cite{caro} the decompositions of $A_H$ induced by the 
Hecke algebra $\cH_H$. Section 3 contains the proof of the general theorem (Theorem \ref{thm1}). In Section 4 we prove the above mentioned theorem. Finally Section 5 contains some examples.

\section{Preliminaries} 

Let $G$ be a finite group acting on an abelian variety $A$ of dimension $g$ over the field of complex numbers and let 
$H \leq G$ be a subgroup of $G$.
The Hecke algebra for $H$ in $G$ acts in a natural way on the abelian variety $A$. To be more precise: the element 
$$
p_H := \frac{1}{|H|} \sum_{h \in H} h
$$ 
is an idempotent in the group algebra $\QQ[G]$.

The {\it (rational)  Hecke algebra for $H$ in $G$} is defined as the subalgebra
$$
\cH_H := p_H\QQ[G ]p_H = \QQ [H\backslash G/H]
$$
of the rational group group algebra $\QQ[G]$. If we consider $\QQ[G]$ as the algebra of functions $G \ra \QQ$ with 
multiplication the convolution product (see \cite[\S 11]{cr}), then $\QQ[H\backslash G/H]$ is the subalgebra of functions which are constant
on each double coset $HgH$, and $p_H$ is the unit in this algebra.

The action of $G$ on $A$ induces an action of $\QQ[G]$ on $A$ in a natural way, giving an algebra homomorphism
\begin{equation} \label{eq:homo}
\QQ[G] \ra \End_\QQ(A) \, .
\end{equation}
Since this homomorphism is canonical, we denote the elements of $\QQ[G]$ and their images by the same letter. For any element
$\alpha \in \QQ[G]$ we define its image in $A$ by 
$$
\Ima(\alpha) := \Ima(n \alpha) \subset A
$$ 
where $n$ is any positive integer such that $n\alpha$ is in $\End(A)$. It is an abelian subvariety of $A$ which does not depend on 
the chosen integer $n$. 

Consider the abelian subvariety of $A$ given by
$$
A_H := \Ima(p_H).
$$
Restricting \eqref{eq:homo} to $\cH_H$ gives an algebra
homomorphism 
\begin{equation} \label{eq:homo2}
\cH_H \ra \End_\QQ(A_H).
\end{equation}

The aim of this section is to recall from \cite{caro} the isotypical and Hecke algebra decompositions of $A_H$ with respect to this action of $\cH_H$.

Let $\{W_1, \dots, W_r\}$ denote the irreducible rational representations of $G$.
To any $W_i$ there corresponds an
 irreducible complex representations $V_i$, uniquely determined up to an element of the Galois group of $K_i$ over $\QQ$,
where $K_i$ is the field obtained by adjoining to $\QQ$ the values of the character $\chi_{V_i}$  of $V_i$. The representations
$W_i$ and $V_i$ are said to be  {\it Galois associated}.

To each $W_i$ we can associate a central idempotent $e_{W_i}$ of $\QQ[G]$ by
$$
e_{W_i} = \frac{\dim_\CC(V_i)}{|G|} \sum_{g \in G}  \tr_{K_i/\QQ}(\chi_{V_i}(g^{-1}))g \, .
$$
Let $\rho_H$ denote the representation of $G$ induced by the trivial representation of $H$. It decomposes as
\begin{equation} \label{e2.1}
\rho_H \simeq \sum_{i=1}^r a_i W_i,
\end{equation}
with $a_i = \frac{1}{s_i} \dim_\CC(V_i^H)$ and $s_i$ the Schur index of $V_i$.  Renumbering if necessary, let $\{W_i, \dots, W_t\}$ denote the set of all 
irreducible  rational representations of $G$ such that $a_i \neq 0$. Then there is a bijection  from this set to the set 
$\{ \widetilde W_1, \dots, \widetilde W_t\}$ of all irreducible rational representations of the algebra $\cH_H$.
An analogous statement holds for the irreducible complex representations of $G$ and of $\CC [H\backslash G/H]$. Let $\widetilde V$ denote the  
representation of  $\CC [H\backslash G/H]$ associated to the complex irreducible representation $V$ of $G$.

According to \cite[equation (2.4)]{caro} and \cite[p. 331]{caro0} the dimension of $\widetilde W_i$ given by
\begin{equation} \label{eq2.2}
\dim_\QQ(\widetilde W_i) = \dim_\QQ (W_i^H) = [L_i:\QQ] \dim_\CC(V_i^H).
\end{equation}
where $L_i$ denotes the field of definition of the representation $V_i$. Recall that the index $s_i=[L_i:K_i]$ is the Schur index of $V_i$.

For $i = 1, \dots,t$ consider the central idempotents of $\cH_H$, 
$$
f_{H,\widetilde W_i} := p_H e_{W_i} = e_{W_i} p_H.
$$ 
Then $p_H$ decomposes as 
$$
p_H = \sum_{i=1}^t f_{H,\widetilde W_i} \, .
$$
Defining for $i=1, \dots , t$ the abelian subvarieties 
$$
A_{H, \widetilde W_i} := \Ima(f_{H,\widetilde W_i}),
$$
one obtains the following isogeny decomposition of $A_H$,  given by the addition map
\begin{equation} \label{e2.2}
+:   A_{H, \widetilde W_1} \times A_{H, \widetilde W_2} \times \cdots \times A_{H, \widetilde W_t}  \ra A_H.
\end{equation}
It is uniquely determined by $H$ and the action of $G$ and  called the {\it isotypical decomposition of} $A_H$.

If $a_i \ge 2$ in \eqref{e2.1}, the subvarieties $A_{H,\widetilde W_i}$ can be decomposed further. In fact, given 
$\widetilde W_i$,
explicit  orthogonal primitive idempotents $f_{i,j}, 1 \le j \le n_i:= \frac{1}{s_i} \dim V_i$  may be found such that 
$$
e_{W_i} = f_{i,1} + \cdots + f_{i,n_i}.
$$
Multiplying by $p_H$ gives
$$
f_{H,\widetilde W_i} = f_{i,1}p_H + \cdots + f_{i,n_i}p_H.
$$
We label the $f_{i,j}p_H$ in such a way that for the first $a_i = \frac{1}{s_i} \dim (V_i^H)$ of them, the minimal left ideals
of the simple algebra $\QQ[G]f_{H,\widetilde W_i}$
$$
J_{i,j} := \QQ[G]f_{i,j}p_H \qquad \mbox {are different from} \; 0 \, ,
$$
and different among themselves; that is, such that
\begin{displaymath}
\QQ[G]f_{H,\widetilde W_i} =  \bigoplus_{j=1}^{a_i} J_{i,j} \, .
\end{displaymath}

Then there exist primitive idempotents $v_{i,1}, \dots, v_{i,a_i}$ in $\QQ[G]f_{H,\widetilde W_i}$, each $v_{i,j}$ generating the corresponding ideal 
$J_{i,j}$,  such that 
\begin{equation}  \label{e2.3}
f_{H,\widetilde W_i} = v_{i,1} + \cdots + v_{i,a_i}.
\end{equation}
Note that by construction the $v_{i,j}$ are orthogonal  idempotents, not uniquely determined, since the $f_{i,j}$ are not uniquely determined. Defining for $j = 1, \dots, a_i$,
$$
B_{H,\widetilde W_i,j} := \Ima (v_{i,j}),
$$
equation \eqref{e2.3} induces the following isogeny
$$
+: B_{H,\widetilde W_1,j} \times B_{H,\widetilde W_2,j} \times \cdots \times B_{H,\widetilde W_{a_i},j} \ra A_{H, \widetilde W_i}.
$$
Here the subvarieties $B_{H,\widetilde W_1,j}, \dots,  B_{H,\widetilde W_{a_i},j}$ are pairwise isogenous. Hence combining 
with the isogeny \eqref{e2.2}, we get the following $\cH_H$-equivariant isogeny
$$
B_{H,\widetilde W_1,1}^{a_1} \times B_{H,\widetilde W_2,1}^{a_2} \times \cdots \times B_{H,\widetilde W_t,1}^{a_t} \ra A_H \, ,
$$
called the {\it Hecke algebra decomposition of} $A_H$ with respect to the action of $G$ and the subgroup $H$.

\section{equations for the abelian subvarieties $ A_{H, \widetilde W_i}$}

In this section we describe the abelian subvarieties $ A_{H, \widetilde W_i}$. The same method works for the abelian
subvarieties $B_{H,\widetilde W_i,j}$, which we omit however. The method relies of the following proposition.

Let $G$ be a finite group acting on the abelian variety $A$.
For any idempotent $\iota$ of $\QQ[G]$ there are two abelian subvarieties of $A$, namely
$$
A_\iota := \Ima{\iota} \subset A \qquad \mbox{and} \qquad C_\iota:= \Ima (1_G - \iota).
$$

\begin{prop} \label{p3.1}
\begin{enumerate}
\item The addition map gives an isogeny
$$
+: A_\iota \times C_\iota \ra A.
$$
\item 
\begin{center}
$A_\iota  = \Ker (1_G-\iota)_0 \qquad \mbox{and} \qquad C_\iota = \Ker (\iota)_0.$
\end{center}
where the index $0$ means the connected component containing $0$.
\end{enumerate}
\end{prop}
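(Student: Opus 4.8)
The plan is to build everything on a single computation with the orthogonal idempotents $\iota$ and $1_G-\iota$, and then to invoke two standard facts about abelian varieties: that multiplication by a nonzero integer is an isogeny, and that for an endomorphism $\phi$ of $A$ one has $\dim\Ima(\phi)+\dim\Ker(\phi)_0=\dim A$. First I would fix a positive integer $n$ with $n\iota\in\End(A)$; then $n(1_G-\iota)=n\cdot 1_G - n\iota$ also lies in $\End(A)$, and $A_\iota=\Ima(n\iota)$, $C_\iota=\Ima(n(1_G-\iota))$ by definition. The engine is the relation $\iota^2=\iota$, $(1_G-\iota)^2=1_G-\iota$ and $\iota(1_G-\iota)=0$. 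From these I would record the key facts: for $x=n\iota\,a\in A_\iota$ one has $n\iota\,x=n^2\iota^2 a=n\cdot x$, while $n(1_G-\iota)x=n^2(1_G-\iota)\iota\,a=0$; symmetrically $n(1_G-\iota)$ acts as multiplication by $n$ on $C_\iota$ and $n\iota$ kills $C_\iota$.

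For part (1), surjectivity is immediate: for any $a\in A$ we have $na=n\iota\,a+n(1_G-\iota)a\in A_\iota+C_\iota$, and since multiplication by $n$ is surjective this gives $A=nA\subseteq A_\iota+C_\iota$, so the addition map is onto. For the kernel, a pair $(x,y)$ with $x+y=0$ has $y=-x\in A_\iota\cap C_\iota$, so the kernel is isomorphic (via $x\mapsto(x,-x)$) to $A_\iota\cap C_\iota$; and if $x$ lies in this intersection then the computation above gives $nx=n\iota\,x=0$, so $x$ is an $n$-torsion point and $A_\iota\cap C_\iota$ is finite. A surjective homomorphism of abelian varieties with finite kernel is an isogeny, which proves (1).

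For part (2) it suffices, by the symmetry $\iota\leftrightarrow 1_G-\iota$, to prove $A_\iota=\Ker(1_G-\iota)_0$, where $\Ker(1_G-\iota)$ means $\Ker(n(1_G-\iota))$. The computation already shows $n(1_G-\iota)$ vanishes on $A_\iota$, so $A_\iota\subseteq\Ker(n(1_G-\iota))$; being connected and containing $0$, in fact $A_\iota\subseteq\Ker(n(1_G-\iota))_0$. To upgrade this containment to equality I would compare dimensions: applying $\dim\Ima(\phi)+\dim\Ker(\phi)_0=\dim A$ to $\phi=n(1_G-\iota)$ gives $\dim\Ker(n(1_G-\iota))_0=\dim A-\dim C_\iota$, while part (1) gives $\dim A=\dim A_\iota+\dim C_\iota$; hence $\dim A_\iota=\dim\Ker(n(1_G-\iota))_0$. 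Two connected abelian subvarieties of equal dimension, one contained in the other, coincide, yielding $A_\iota=\Ker(1_G-\iota)_0$, and symmetrically $C_\iota=\Ker(\iota)_0$. I expect the only real subtlety to be the bookkeeping around the rational action, namely checking that $A_\iota$, $C_\iota$ and the two kernels are genuinely independent of the auxiliary integer $n$ (which holds since multiplication by $n$ is an isogeny), rather than any deep geometric input; the image–kernel dimension formula does the decisive work in passing from containment to equality.
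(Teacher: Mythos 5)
Your proof is correct, and your part (2) is essentially the paper's argument: both establish the containment $A_\iota\subseteq\Ker(n(1_G-\iota))$ and then conclude by a dimension count coming from the exact sequence $0\to\Ker(n(1_G-\iota))\to A\to C_\iota\to 0$ together with part (1). Where you genuinely diverge is in part (1). The paper proves that the surjection $+:A_\iota\times C_\iota\to A$ is an isogeny analytically: writing $A=V/\Lambda$ and diagonalizing the analytic representation of the idempotent $\iota$ as $\mathrm{diag}(1,\dots,1,0,\dots,0)$, it reads off $\dim C_\iota=\dim A-\dim A_\iota$ and concludes that the addition map is an isomorphism on tangent spaces. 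You instead identify the kernel of the addition map with $A_\iota\cap C_\iota$ and show it is killed by $n$ (since $nx=n\iota x=0$ for $x$ in the intersection), hence finite, so the surjection is an isogeny by definition. Your route is more elementary and purely algebraic --- it uses only the idempotent relations and the divisibility of the group $A$, not the complex uniformization --- and would carry over to abelian varieties over other fields; the paper's tangent-space computation instead produces the identity $\dim A=\dim A_\iota+\dim C_\iota$ by explicit linear algebra, which is the form in which it is reused in part (2). You recover that same identity from the isogeny of part (1), so both arguments are complete and the bookkeeping about independence of the auxiliary integer $n$ is, as you say, routine.
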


To be more precise, for an element $\alpha \in \End_\QQ(A)$ we denote by $\Ker (\alpha)_0$ the connected component 
containing 0
of some positive multiple $n \alpha$ which is an endomorphism. This does not depend on the chosen $n$. 

Given a polarization of $A$, there is an analogous result for any abelian subvariety using norm-endomorphisms 
(see \cite[Section 5.3]{bl}). One could give also a proof of the proposition introducing polarizations, but we prefer to give a direct proof.

\begin{proof}
(1): Since $\iota + (1_G - \iota) = 1_G$, the addition map $+: A_\iota \times C_\iota \ra A$ is surjective. To see that is is an 
isogeny, it suffices to show that on the level of tangent spaces it is an isomorphism.  So suppose $A = V/\Lambda$ with a 
$\CC$-vector space $V$ and a lattice $\Lambda$. we may choose the basis of $V$ such that the analytic representation
$\rho_a(\iota)$ is given by the diagonal matrix diag$(1,\dots,1,0,\dots,0)$ with the number of $1$'s equal to $\dim A_\iota$. 
Hence $\rho_a(1_G - \iota) =$ diag$(0, \dots,0,1,\dots 1)$ with the number of $0$'s equal to $\dim A_\iota$. This implies
$\dim C_\iota = \dim A - \dim A_\iota$ and thus the assertion.

(2): Choose a positive integer $n$ such that $n(1_G - \iota) \in \End_\QQ(A)$ and consider the exact sequence 
$$
0 \ra \Ker(n(1_G - \iota)) \ra A \stackrel{n(1_G - \iota)}{\ra} C_\iota \ra 0.
$$
Certainly 
$A_\iota \subset \Ker(n(1_G - \iota))$. Since both have the same dimension, this implies the assertion.
\end{proof}

Now let $H$ be a subgroup of $G$ and $\iota$ be an idempotent of the corresponding Hecke algebra $\cH_H$. It induces two 
abelian subvarieties of the abelian variety $A_H$, namely
$$
A_{H,\iota} := \Ima(\iota) \qquad \mbox{and} \qquad P_{H,\iota} := \Ima (p_H - \iota).
$$
Notice that $p_H - \iota: A_H \ra A_H$ is also an idempotent of $\cH_H$, since $p_H$ is the unit element of the algebra 
$\cH_H$ and hence its image in  $\End_\QQ(A_H)$  is the identity on $A_H$.  If we denote by $\iota$ also the corresponding idempotent of $\End_\QQ(A_H)$, this implies that $p_{H} -\iota$ is 
an idempotent of $\End_\QQ(A_H)$.

\begin{prop} \label{p3.2}
\begin{enumerate}
\item The addition map gives an isogeny
$$
+: A_{H,\iota} \times C_{H,\iota} \ra A_H.
$$
\item 
\begin{center}
$A_{H,\iota}  = \Ker (p_H - \iota)_0 \qquad \mbox{and} \qquad P_{H,\iota} = \Ker (\iota)_0.$
\end{center}
\end{enumerate}
\end{prop}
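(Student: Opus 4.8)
The plan is to deduce Proposition \ref{p3.2} from Proposition \ref{p3.1} by treating $A_H$ itself as the ambient abelian variety and $\iota,\,p_H-\iota$ as a complementary pair of idempotents on it. The key point, already recorded in the remark preceding the statement, is that $p_H$ is the unit of $\cH_H$, so its image under \eqref{eq:homo2} is the identity endomorphism of $A_H$. Hence $\iota$ and $p_H-\iota$ are idempotents of $\End_\QQ(A_H)$ with $\iota+(p_H-\iota)=\mathrm{id}_{A_H}$, and they play precisely the roles that $\iota$ and $1_G-\iota$ played in Proposition \ref{p3.1}.

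First I would observe that the proof of Proposition \ref{p3.1} never uses the group-ring structure: it relies only on having two complementary idempotents in the rational endomorphism algebra of an abelian variety. Therefore the same argument applies verbatim after replacing $A$ by $A_H$, the identity $1_G$ by $\mathrm{id}_{A_H}$, the subvariety $A_\iota$ by $A_{H,\iota}=\Ima(\iota)$, and $C_\iota$ by $P_{H,\iota}=\Ima(p_H-\iota)$.

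For part (1), surjectivity of the addition map $+\colon A_{H,\iota}\times P_{H,\iota}\to A_H$ follows from $\iota+(p_H-\iota)=\mathrm{id}_{A_H}$, and it is then enough to check it is an isogeny on tangent spaces: choosing a basis of the tangent space of $A_H$ in which $\rho_a(\iota)=\mathrm{diag}(1,\dots,1,0,\dots,0)$ forces $\rho_a(p_H-\iota)$ to be the complementary diagonal idempotent, so $\dim P_{H,\iota}=\dim A_H-\dim A_{H,\iota}$ and the differential of the addition map is an isomorphism. For part (2), I would pick $n$ so that $n(p_H-\iota)$ is a genuine endomorphism of $A_H$ and use the exact sequence $0\to\Ker\big(n(p_H-\iota)\big)\to A_H\stackrel{n(p_H-\iota)}{\ra}P_{H,\iota}\to 0$; since $A_{H,\iota}\subset\Ker\big(n(p_H-\iota)\big)$ and both have the same dimension by part (1), this gives $A_{H,\iota}=\Ker(p_H-\iota)_0$, and $P_{H,\iota}=\Ker(\iota)_0$ follows by exchanging the roles of $\iota$ and $p_H-\iota$.

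The only delicate point---more a bookkeeping issue than a real obstacle---is the passage from $A$ to the subvariety $A_H$: one must be sure that $p_H$ restricts to the identity on $A_H$ and that $\iota$ restricts to an idempotent of $\End_\QQ(A_H)$, so that Proposition \ref{p3.1} can legitimately be run inside $A_H$. This is exactly what the remark before the statement provides, after which the proof is a direct transcription of that of Proposition \ref{p3.1}.
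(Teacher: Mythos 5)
Your proposal is correct and matches the paper exactly: the authors simply state that the proof is ``essentially the same as the proof of Proposition \ref{p3.1},'' which is precisely your reduction via the observation that $p_H$ acts as the identity on $A_H$, making $\iota$ and $p_H-\iota$ complementary idempotents in $\End_\QQ(A_H)$. You also correctly read the typo $C_{H,\iota}$ in part (1) as $P_{H,\iota}$.
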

\noindent
The proof is essentially the same as the proof of Proposition \ref{p3.1}. 
We want to use the proposition to describe the isotypical components 
of $A_H$ as fixed-point sets of particular endomorphisms of $A_H$.

First we consider the trivial representation $W_0$ of $G$ and any subgroup $H$ of $G$.
Note that 
$$
e_{W_0} := p_G = \dfrac{1}{|G|} \sum_{g \in G} g
$$
 is the central idempotent in $\mathbb{Q}[G]$ corresponding to $W_0$.
Moreover, choose  a set of representatives $g_1 , \ldots , g_s$ for both the right and left cosets of $H$ in $G$. Such a set exists 
according to \cite[Theorem 5.1.7]{h}.  Using this, we get as a special case,

\begin{cor}  \label{cor3.3}
With the above notations we have for the abelian subvariety $A_G = (A^G)_0$ and the complement $P(A_H, A_G)$
of  $A_G$ in $A_H = \Ima(p_H)$,
\begin{enumerate}
\item[(i)] $A_G = \{ z \in A_H :  \sum_{j=1}^s g_j(z) =z  \}_0,$
\item[(ii)]  $P(A_H, A_G) = \{  z \in A_H : \sum_{j=1}^s g_j(z) =0  \}_0.$
\end{enumerate}
\end{cor}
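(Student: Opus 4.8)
The plan is to obtain Corollary \ref{cor3.3} as the special case of Proposition \ref{p3.2} in which the idempotent is $\iota = p_G = e_{W_0}$. The first task is to check that $p_G$ really is an idempotent of the Hecke algebra $\cH_H$ and that its image is $A_G$. Since $p_G$ is the central idempotent of the trivial representation, $p_G g = g p_G = p_G$ for every $g \in G$; averaging over $H$ gives $p_H p_G = p_G p_H = p_G$, so that $p_G = p_H p_G p_H \in \cH_H$. Consequently $A_G = \Ima(p_G) = \Ima(p_H p_G) \subseteq A_H$, and with respect to the action of $\cH_H$ this is precisely the isotypical component $A_{H,p_G}$ attached to the trivial representation, whose complement in $A_H$ is $P_{H,p_G} = \Ima(p_H - p_G)$.

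With this identification, I would apply Proposition \ref{p3.2}(2) directly with $\iota = p_G$, obtaining
$$
A_G = \Ker(p_H - p_G)_0 \qquad \text{and} \qquad P(A_H,A_G) = \Ker(p_G)_0,
$$
the kernels being taken inside $A_H$, on which $p_H$ acts as the identity. Thus the whole content of the corollary reduces to rewriting the two conditions $p_G(z) = z$ (equivalently $(p_H - p_G)(z) = 0$) and $p_G(z) = 0$ as the explicit equations in (i) and (ii).

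The central step is therefore to express the action of $p_G$ on $A_H$ through the group elements $g_1, \dots, g_s$, and this is where the common left/right transversal enters. Writing $G = \bigsqcup_{j=1}^s g_j H = \bigsqcup_{j=1}^s H g_j$ and using $\sum_{h \in H} h = |H|\, p_H$, one finds $p_G = \tfrac{1}{s}(\sum_{j=1}^s g_j)\, p_H = \tfrac{1}{s}\, p_H (\sum_{j=1}^s g_j)$; restricting to $A_H$, where $p_H = \mathrm{id}$, the endomorphism $p_G$ acts as $\tfrac{1}{s} \sum_{j=1}^s g_j$. Equivalently, the key identity is that every $z \in A_H$ satisfies $\sum_{h \in H} h(z) = |H|\, z$, which collapses the full average defining $p_G(z)$ to $\tfrac{1}{s} \sum_{j=1}^s g_j(z)$. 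Substituting this into $p_G(z) = 0$ gives $\sum_{j=1}^s g_j(z) = 0$, which is (ii); substituting into $p_G(z) = z$ gives $\tfrac{1}{s} \sum_{j=1}^s g_j(z) = z$, that is, the fixed-point equation of (i) (the normalizing factor $s$ arising exactly from $p_G|_{A_H} = \tfrac{1}{s}\sum_{j=1}^s g_j$).

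I expect the genuinely delicate points to be only organizational. One must be sure that $p_G \in \cH_H$ and that $\Ima(p_G)$ is the subvariety $A_G = (A^G)_0$ rather than a larger, possibly disconnected fixed locus; and one must remember throughout that images and kernels are formed up to isogeny, so that each displayed equation determines the intended abelian subvariety only after passing to the connected component containing $0$, as recorded by the subscript $0$. All the structural work is done by Proposition \ref{p3.2}; once the identity $\sum_{h \in H} h(z) = |H|\, z$ on $A_H$ is available, the remaining manipulation with the transversal is routine.
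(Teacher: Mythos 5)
Your argument follows essentially the same route as the paper's: both specialize Proposition \ref{p3.2} to the idempotent $\iota = e_{W_0} = p_G$, and both use the simultaneous left/right transversal to identify $[G:H]\,p_G$ on $A_H$ with $\sum_{j=1}^s g_j$. The only (harmless) difference is that you establish $p_G \in \cH_H$ from centrality, via $p_G = p_H p_G p_H$, whereas the paper reads it off from the identity $[G:H]\, e_{W_0} = p_H\bigl(\sum_{j=1}^s g_j\bigr) = \bigl(\sum_{j=1}^s g_j\bigr)p_H$; both then use that identity to produce the explicit equations.

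One point deserves more care than you give it. Your computation correctly yields $p_G|_{A_H} = \frac{1}{s}\sum_{j=1}^s g_j$, so $\Ker(p_H - p_G)_0 = \{ z \in A_H : \sum_{j=1}^s g_j(z) = s\,z \}_0$, which is \emph{not} the set $\{ z \in A_H : \sum_{j=1}^s g_j(z) = z \}_0$ appearing in (i); you derive the right equation and then declare it to be ``the fixed-point equation of (i)'', but the factor $s$ cannot be absorbed. Indeed, on $A_G$ itself one has $\sum_{j=1}^s g_j(z) = s z$, and since $\sum_{j=1}^s g_j - 1_G$ acts as $s-1$ on $A_G$ and as $-1$ on the complement, $\Ker\bigl(\sum_{j=1}^s g_j - 1_G\bigr)_0 = 0$ whenever $H \lneq G$. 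The paper's own proof makes the same silent slip in passing from $\Ker(p_H - e_{W_0})_0$ to the displayed set, so your write-up is faithful to the source; but part (i) should read $\sum_{j=1}^s g_j(z) = s\,z$ (equivalently $[G:H]\,z$), while part (ii) is unaffected since scaling by $s$ does not change the condition $\sum_{j=1}^s g_j(z)=0$.
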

 
The notation $P(A_H,A_G)$ comes from the fact that in the case of a Galois cover of curves  $ C \ra  C/G$ and any subgroup $H$ of $G$ this is just the Prym variety $P( C/H,  C/G)$. 
In fact, Corollary \ref{cor3.3} generalizes \cite[Corollary 3.5, p.10]{rr} where it is proven for these Prym varieties 
$P(C/H, C/G)$.

\begin{proof}
We may assume that  $H \lneq G$. Then we have
	\begin{equation} \label{e33}
	p_H = (p_H - e_{W_0}) + e_{W_0}.
	\end{equation}	
 Observe that because of the special choice of the $g_i$,
	$$
	[G:H] e_{W_0}  = p_H \left( \sum_{j=1}^s g_s \right) = \left( \sum_{j=1}^s g_j \right) p_H.
	$$
Hence $e_{W_0}$ is  in $\cH_H$ and furthermore $\Ima(e_{W_0}) = A_G$.
The assertion  now follows from Proposition \ref{p3.2}, since
	$$
	A_G = \Ker(p_H -  e_{W_0})_0 = \{ z \in A_H :  \sum_{j=1}^s g_j(z) =z  \}_0 
	$$
	and 
	$$
	P(A_H, A_G) =  \Ker(e_{W_0})_0 = \{  z \in A_H : \sum_{j=1}^s g_j(z) =0  \}_0
	$$
and $A_H = (A^H)_0$.
\end{proof}

Let $\widetilde W$ be the irreducible rational representation of the Hecke algebra $\cH_H$ associated to the irreducible rational 
representation $W$ of $G$ and 
$$
f_{H,\widetilde W} = p_H e_W = e_W p_H
$$ 
the corresponding central idempotent of $\cH_H$. In order to find a more convenient expression of $f_{H,\widetilde W}$,
consider the decomposition of $G$ into double cosets of $H$ in $G$,
$$
G = H_1 \cup H_2 \cup \cdots \cup H_s
$$
with $H_i = Hx_iH$ and $x_1 =1$, i.e. $H_1 =H$.
A basis for the Hecke algebra $\cH_H$ is given by the elements
\begin{equation} \label{e3.1}
q_i:= \frac{|H|}{|H \cap x_iHx_i^{-1}|} p_H x_i p_H = \frac{1}{|H|} \sum_{h_i \in H_i} h_i
\end{equation}
for $i = 1, \dots, s$  (see \cite[Proposition 11.30(i)]{cr}). For the last equation use that
\begin{equation} \label{eq3.1}
\frac{|H|}{|H \cap x_iHx_i^{-1}|} =|H: H \cap  x_iHx_i^{-1}| = \frac{|Hx_iH|}{|H|} = \frac{|H_i|}{|H|}
\end{equation}

Let $\{g_{i,j}, \; j = 1, \dots, d_i:= |H:x_iHx_i^{-1}|\}$ denote a set of simultaneous representatives for the right and left cosets for $H$ in $H_i$. Such a set exists again by \cite[Theorem 5.1.7]{h}.  Then we have,
\begin{lem} \label{l3.3}
Considering the elements $q_i$ as elements of $\End_\QQ(A_H)$, we have: $q_i$ is an endomorphism of $A_H$ and as such
$$
q_i (z) = \sum_{j=1}^{d_i} g_{i,j}(z) 
$$
for each $z \in A_H$.
\end{lem}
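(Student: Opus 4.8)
The plan is to prove the lemma by directly computing the action of the basis element $q_i$ on a point $z \in A_H$, starting from the two equivalent expressions for $q_i$ given in \eqref{e3.1}. First I would recall that $z \in A_H = \Ima(p_H)$ means that $z$ is fixed by the idempotent $p_H$, i.e. $p_H(z) = z$ (up to the usual connected-component caveat, since $p_H$ acts as the identity on $A_H$); this is the key structural fact that lets one simplify expressions involving $p_H$ applied to $z$. The main point to settle before the computation is that $q_i$ is genuinely an endomorphism of $A_H$ and not merely an element of $\End_\QQ(A_H)$; this should follow because $\frac{1}{|H|}\sum_{h_i \in H_i} h_i$ has coefficients that, while rational, combine with the averaging built into $p_H$ so that the composite acts integrally on $A_H$. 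Concretely, I would use the second expression $q_i = \frac{|H|}{|H \cap x_i H x_i^{-1}|}\, p_H x_i p_H$ and the coset decomposition to show the action is given by a genuine sum of group elements.

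The heart of the argument is the identity $\frac{1}{|H|}\sum_{h_i \in H_i} h_i (z) = \sum_{j=1}^{d_i} g_{i,j}(z)$ for $z \in A_H$. I would establish this by decomposing the double coset $H_i = H x_i H$ using the simultaneous right and left coset representatives $g_{i,1},\dots,g_{i,d_i}$ guaranteed by \cite[Theorem 5.1.7]{h}. The double coset $H x_i H$ decomposes as a disjoint union of right cosets $H g_{i,j}$ with $j = 1,\dots,d_i$, and each right coset $H g_{i,j}$ contains exactly $|H|$ elements, so $|H_i| = d_i |H|$, consistent with \eqref{eq3.1} which gives $d_i = |H_i|/|H|$. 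The sum over $h_i \in H_i$ therefore regroups as $\sum_{j=1}^{d_i} \sum_{h \in H} h g_{i,j}$. Applying this to $z$ and using that $\sum_{h\in H} h$ acts on $A_H$ as multiplication by $|H|$ (because $p_H = \frac{1}{|H|}\sum_{h\in H} h$ restricts to the identity on $A_H$, so $\sum_{h\in H} h (g_{i,j}(z)) = |H|\, p_H(g_{i,j}(z))$), one needs $p_H(g_{i,j}(z)) = g_{i,j}(z)$.

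The step I expect to be the main obstacle is justifying that $p_H(g_{i,j}(z)) = g_{i,j}(z)$, i.e. that $g_{i,j}(z)$ already lies in $A_H$, or at least that the averaging $p_H$ fixes it. This is where the left-coset condition on the $g_{i,j}$ enters: because the $g_{i,j}$ are simultaneous representatives for both right and left cosets, the sets $\{g_{i,j} H\}$ also partition $H_i$, and this symmetry is exactly what makes $p_H x_i p_H$ collapse to $\frac{1}{|H|}\sum_j g_{i,j}$ applied to $p_H(z) = z$. I would argue that $p_H g_{i,j} p_H = \frac{1}{|H|} g_{i,j} p_H \cdot |H \cap x_iHx_i^{-1}|/|H|$-type relations, or more cleanly, that since $z = p_H(z)$, one has $h(z)$ summed over the right coset giving the factor $|H|$ that cancels the $\frac{1}{|H|}$, leaving precisely $\sum_{j=1}^{d_i} g_{i,j}(z)$. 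Once the combinatorial bookkeeping of right versus left cosets is handled correctly, the formula for $q_i(z)$ drops out and the integrality of the resulting expression shows $q_i$ is an endomorphism of $A_H$, completing the proof.
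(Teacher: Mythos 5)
Your overall strategy --- decompose the double coset via the simultaneous left/right coset representatives and use that $p_H$ restricts to the identity on $A_H$ --- is the same as the paper's, but the way you regroup the sum forces you through a step that is actually false. You write $\sum_{x \in H_i} x = \sum_{j=1}^{d_i}\sum_{h\in H} h g_{i,j}$, grouping by \emph{right} cosets $Hg_{i,j}$, so that after applying this to $z$ you are left needing $p_H(g_{i,j}(z)) = g_{i,j}(z)$, i.e.\ that each individual $g_{i,j}(z)$ already lies in $A_H$. This fails in general: $g_{i,j}(A_H) = \Ima(g_{i,j}\,p_H\,g_{i,j}^{-1}) = A_{g_{i,j}Hg_{i,j}^{-1}}$, which is a different subvariety whenever $g_{i,j}$ does not normalize $H$ (already for $G=S_3$, $H=\langle(12)\rangle$, $g_{i,j}=(13)$). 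Only the full sum $\sum_j g_{i,j}(z)$ is guaranteed to be fixed by $p_H$, so you cannot strip off $p_H$ term by term. You correctly sense that this is the sticking point and that the left-coset property must enter, but the resolution you sketch (the ``$p_H g_{i,j} p_H = \dots$-type relations'' and ``$h(z)$ summed over the right coset'') conflates the two decompositions and never actually justifies the needed identity.

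The fix is to group the other way, which is what the paper does. Since $H_i = \bigcup_{j=1}^{d_i} g_{i,j}H$ (left cosets), one has
$$
\sum_{x\in H_i} x \;=\; \Bigl(\sum_{j=1}^{d_i} g_{i,j}\Bigr)\Bigl(\sum_{h\in H} h\Bigr) \;=\; |H|\,\Bigl(\sum_{j=1}^{d_i} g_{i,j}\Bigr)p_H ,
$$
hence $q_i = \bigl(\sum_j g_{i,j}\bigr)p_H$ in $\QQ[G]$. Now $p_H$ hits $z$ \emph{first}, and $p_H(z)=z$ for $z\in A_H$ immediately gives $q_i(z)=\sum_j g_{i,j}(z)$, an integral expression, so $q_i$ is a genuine endomorphism of $A_H$; that its values lie in $A_H$ follows from the right-coset grouping $q_i = p_H\bigl(\sum_j g_{i,j}\bigr)$. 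This is precisely where the simultaneity of the representatives is used: the right-coset identity shows the image is in $A_H$, the left-coset identity gives the formula.
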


\begin{proof}
Since $H_i = \cup_{j=1}^{d_i} Hg_{i,j} =  \cup_{j=1}^{d_i} g_{i,j}H$, we have,
$$
q_i =  \frac{1}{|H|} \sum_{x \in H_i} x = p_H \left( \sum_{j=1}^{d_i} g_{i,j} \right) = \left( \sum_{j=1}^{d_i} g_{i,j} \right) p_H.
$$
This gives the assertion, since $p_H$ is the identity on $A_H$.
\end{proof}

\begin{lem} \label{lem3.3}
The following equality is valid in the Hecke algebra $\cH_H$:
$$
f_{H, \widetilde W} = \frac{\dim V}{|G|} \sum_{i=1}^s \left( |H \cap x_iHx_i^{-1}|   \tr_{K/\QQ}(\chi_V(q_i)) \right) q_i
$$
\end{lem}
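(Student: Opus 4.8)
The plan is to compute $f_{H,\widetilde W} = p_H e_W p_H$ directly. Since $e_W$ is central in $\QQ[G]$ and $p_H$ is idempotent, we have $p_H e_W = e_W p_H = p_H e_W p_H$, so there is no ambiguity. Inserting the definition of $e_W$ gives
$$
f_{H,\widetilde W} = \frac{\dim V}{|G|} \sum_{g \in G} \tr_{K/\QQ}(\chi_V(g^{-1}))\, p_H g p_H,
$$
and the whole problem reduces to evaluating $p_H g p_H$ and collecting the coefficients of the basis elements $q_i$.

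First I would group the sum over $g \in G$ according to the double cosets $H_i = Hx_iH$. The elementary fact driving everything is that $p_H$ absorbs $H$ on either side, i.e. $p_H h = h p_H = p_H$ for $h \in H$; hence for every $g = h x_i h' \in H_i$ one gets $p_H g p_H = p_H x_i p_H$, independent of the particular $g$ in the double coset. Combined with \eqref{e3.1}, which rearranges to $p_H x_i p_H = \frac{|H \cap x_iHx_i^{-1}|}{|H|}\, q_i$, this shows that the entire inner sum over $g \in H_i$ produces a single multiple of $q_i$, yielding
$$
f_{H,\widetilde W} = \frac{\dim V}{|G|} \sum_{i=1}^s \left( \sum_{g \in H_i} \tr_{K/\QQ}(\chi_V(g^{-1})) \right) \frac{|H \cap x_iHx_i^{-1}|}{|H|}\, q_i.
$$

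It then remains to identify the inner sum with $|H|\,\tr_{K/\QQ}(\chi_V(q_i))$. Here I would use that $\chi_V(g^{-1}) = \overline{\chi_V(g)}$ together with the invariance of $\tr_{K/\QQ}$ under complex conjugation: the field $K = \QQ(\chi_V)$ is stable under complex conjugation (because $\overline{\chi_V(g)} = \chi_V(g^{-1})$ is again a character value, hence lies in $K$), and $\tr_{K/\QQ}$ is unchanged by any $\QQ$-automorphism of $K$ since such an automorphism merely permutes the embeddings $K \hra \CC$. Consequently $\tr_{K/\QQ}(\chi_V(g^{-1})) = \tr_{K/\QQ}(\chi_V(g))$ term by term. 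Summing over $g \in H_i$, using $\QQ$-linearity of the trace and the identity $\sum_{g \in H_i} \chi_V(g) = |H|\,\chi_V(q_i)$ coming from the definition \eqref{e3.1} of $q_i$, the inner sum becomes $|H|\,\tr_{K/\QQ}(\chi_V(q_i))$; substituting and cancelling the factor $|H|$ gives exactly the claimed formula.

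Most of the argument is bookkeeping with double cosets and the absorption property of $p_H$. The one step requiring genuine care — and the main obstacle — is the conjugation-invariance argument of the last paragraph, which is precisely what allows the inverse $g^{-1}$ occurring in the coefficients of $e_W$ to be traded for the ``positive'' basis element $q_i$ supported on the double coset $H_i$. I would make sure to verify explicitly both that $K$ is closed under complex conjugation and that the trace is preserved by the induced automorphism, since without this the coefficient would naturally be expressed through the inverse double coset $Hx_i^{-1}H$ rather than $H_i$ itself.
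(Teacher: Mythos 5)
Your proposal is correct and follows essentially the same route as the paper's proof: expand $f_{H,\widetilde W}=p_He_Wp_H$, group the sum over $G$ into double cosets using the absorption $p_Hgp_H=p_Hx_ip_H$ and the relation $p_Hx_ip_H=\frac{|H\cap x_iHx_i^{-1}|}{|H|}q_i$ from \eqref{e3.1}, then trade $\chi_V(g^{-1})$ for $\chi_V(g)$ under $\tr_{K/\QQ}$. Your explicit justification that $K$ is stable under complex conjugation and that the trace is invariant under the induced automorphism is a welcome elaboration of a step the paper passes over silently.
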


\begin{proof}Since $e_W$ is central and $p_H^2 = p_H$, we have 
\begin{eqnarray*}
f_{H, \widetilde W} = p_H e_W &=& p_H e_W p_H = \frac{\dim V}{|G|} \sum_{g \in G} \tr_{K/\QQ}(\chi_V(g^{-1})) (p_H g p_H)\\ 
&=& \frac{\dim V}{|G|} \sum_{i=1}^s  \sum_{g \in H_i}  \tr_{K/\QQ}(\chi_V(g^{-1}))   p_H g p_H\\
&=& \frac{\dim V}{|G|}  \sum_{i=1}^s \left( \sum_{g \in H_i}  \tr_{K/\QQ}(\chi_V(g^{-1}))   p_H x_i p_H \right)\\
&=&  \frac{\dim V}{|G|}  \sum_{i=1}^s \left( \sum_{g \in H_i} \frac{|H \cap x_iHx_i^{-1}|}{|H|}  \tr_{K/\QQ}(\chi_V(g^{-1})) \right) \frac{|H|}{|H \cap x_iHx_i^{-1}|}  p_H x_i p_H.\\
\end{eqnarray*}

But for any $g \in G$ we have $\chi_V(g^{-1}) = \overline{\chi_V(g)}$, and therefore 
$$
\sum_{g \in H_i} \tr_{K/\QQ}(\chi_V(g^{-1})) = \sum_{g \in H_i} \tr_{K/\QQ}(\chi_V(g)) = |H| \tr_{K/\QQ}(\chi_V(q_i)).
$$

So equation \eqref{e3.1} gives the assertion.
\end{proof}

Recall that we consider any element of $\QQ[G]$ also as en element of $\End_\QQ(A)$. 
We denote for any element $\alpha$ of $\cH_H \subset \QQ[G]$ by 
$$
\Ker (\alpha)_0
$$ 
the connected component of $\Ker (n\alpha) \subset A_H$ containing $0$,  where $n$ is any positive integer such that
 $n \alpha$  is actually an endomorphism. This does not depend of the chosen $n$. Then we get
the following equation for the abelian subvariety $A_{H, \widetilde W}$ of $A_H$.

\begin{thm} \label{thm1}
The isotypical component $A_{H,\widetilde W}$ of $A_H$ is given by 
$$
A_{H,\widetilde W} = \Ker\left( p_H - \frac{ \dim V}{|G|}  \sum_{i=1}^s \left(  |H \cap x_iHx_i^{-1}|  \tr_{K/\QQ}(\chi_V(q_i)) \right) q_i  \right)_0.
$$
\end{thm}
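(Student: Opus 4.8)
The plan is to combine the two main ingredients already assembled in this section: the structural result of Proposition~\ref{p3.2}, which identifies an idempotent's image with the connected component of a kernel, and the explicit formula for the central idempotent $f_{H,\widetilde W}$ furnished by Lemma~\ref{lem3.3}. Since $A_{H,\widetilde W} = \Ima(f_{H,\widetilde W})$ by definition, and $f_{H,\widetilde W}$ is an idempotent of $\cH_H$, I would apply Proposition~\ref{p3.2} with $\iota = f_{H,\widetilde W}$. Part~(2) of that proposition gives immediately
$$
A_{H,\widetilde W} = \Ima(f_{H,\widetilde W}) = \Ker(p_H - f_{H,\widetilde W})_0 \, .
$$

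The remaining work is then purely a matter of substituting the closed-form expression for $f_{H,\widetilde W}$. First I would invoke Lemma~\ref{lem3.3} to write
$$
f_{H,\widetilde W} = \frac{\dim V}{|G|} \sum_{i=1}^s \left( |H \cap x_iHx_i^{-1}| \, \tr_{K/\QQ}(\chi_V(q_i)) \right) q_i \, ,
$$
and substitute this into the displayed kernel. This yields exactly the asserted formula for $A_{H,\widetilde W}$, with the understanding that $\Ker(\cdot)_0$ is taken in $A_H$ in the sense explained just before the statement (pass to a positive integer multiple that is a genuine endomorphism, take the connected component of the kernel containing $0$, independently of the chosen multiple).

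Two points warrant a brief verification. One should check that $f_{H,\widetilde W}$ is indeed an idempotent of $\cH_H$ and not merely of $\QQ[G]$, so that Proposition~\ref{p3.2} applies on $A_H$ rather than on $A$; this is exactly the content of $f_{H,\widetilde W} = p_H e_W = e_W p_H$ together with $p_H$ being the unit of $\cH_H$, as recorded in Section~2. One should also note that the $q_i$, though a priori only in $\cH_H \subset \QQ[G]$, act as honest endomorphisms of $A_H$ by Lemma~\ref{l3.3}, which legitimizes reading the right-hand side as an endomorphism of $A_H$ whose kernel we may form.

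I do not expect a serious obstacle here: once Proposition~\ref{p3.2} and Lemma~\ref{lem3.3} are in place, the theorem is their direct combination. If anything deserves care it is the bookkeeping around the operator $p_H - f_{H,\widetilde W}$, namely confirming that $p_H$ restricts to the identity on $A_H$ so that the expression inside the kernel genuinely represents the complementary idempotent $p_H - f_{H,\widetilde W}$ rather than something requiring an extra correction term; this follows because $p_H$ is the unit of $\cH_H$ whose image in $\End_\QQ(A_H)$ is $\mathrm{id}_{A_H}$.
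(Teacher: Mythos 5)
Your proposal is correct and coincides with the paper's own argument: the authors likewise apply Proposition~\ref{p3.2} to the idempotent $f_{H,\widetilde W}$ to get $A_{H,\widetilde W} = \Ker(p_H - f_{H,\widetilde W})_0$ and then substitute the expression from Lemma~\ref{lem3.3}. Your additional checks (that $f_{H,\widetilde W}$ lies in $\cH_H$ and that $p_H$ acts as the identity on $A_H$) are consistent with what the paper establishes earlier in the section.
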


\begin{proof}
By definition of $A_{H,\widetilde W}$ and Proposition \ref{p3.2} we have 
$$
A_{H,\widetilde W} = \Ima(f_{H,\widetilde W}) =\Ker (p_H - f_{H,\widetilde W})_0.
$$
So Lemma  \ref{lem3.3} gives the assertion.
\end{proof}

In the next section we will use the following orthogonality relations,

\begin{lem} \label{orth}
Let $\widetilde U, \widetilde V$ be complex irreducible representations of $\mathbb{C}[H\backslash G /H]$ associated to the irreducible representations 
$U, V$ of $G$. Then, with the notation of above, 
$$
\sum_{j=1}^s \frac{|H|}{|H \cap x_jHx_j^{-1}|} \chi_{\widetilde U}(q_j^{-1}) \chi_{\widetilde V}(q_j) 
= \left\{\begin{array}{lll}
             0 & if & \widetilde U \neq \widetilde V;\\
            \frac{[G:H]}{\dim U} & if & \widetilde U = \widetilde V,
           \end{array}        \right.
$$
where $q_j^{-1} := \frac{1}{|H|} \sum\limits_{y \in H_j} y^{-1}$.
\end{lem}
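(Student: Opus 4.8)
The plan is to reduce the orthogonality relation for $\cH_H$ to the classical Schur orthogonality relations for the group $G$, using the idempotent expansion that is already available from Lemma \ref{lem3.3}. The first step I would carry out is to record the compatibility of the two kinds of characters appearing in the statement. Since $q_j = p_H q_j p_H$, the operator $q_j$ on the space of $V$ kills $(1-p_H)V$ and has image contained in $V^H = \widetilde V$; hence $\chi_{\widetilde V}(q_j) = \tr(q_j\mid V^H) = \tr(q_j\mid V) = \chi_V(q_j)$, where on the right $\chi_V$ is the linear extension of the $G$-character to $\QQ[G]$ (and likewise $\chi_{\widetilde U}(q_j^{-1}) = \chi_U(q_j^{-1})$). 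This converts every Hecke-character value in the statement into an ordinary $G$-character value evaluated on the explicit element $q_j$ or $q_j^{-1}$, which is precisely what lets group-level orthogonality enter.

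Next I would invoke the complex analogue of Lemma \ref{lem3.3}: the identical computation, carried out over $\CC$ and retaining $\chi_V(q_i^{-1})$ in place of the rational trace, expresses the central idempotent $f_{H,\widetilde V} = p_H e_V \in \CC[H\backslash G/H]$ in the basis $\{q_i\}$ as $f_{H,\widetilde V} = \frac{\dim V}{|G|}\sum_{i=1}^s |H\cap x_iHx_i^{-1}|\,\chi_V(q_i^{-1})\,q_i$. The decisive structural input is that the $f_{H,\widetilde V}$ are the central primitive idempotents of the semisimple algebra $\CC[H\backslash G/H]$, cutting out its isotypic blocks; this is a repackaging of $e_Ue_V = \delta_{U,V}e_V$ in $\CC[G]$, i.e.\ of Schur orthogonality for $G$. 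Consequently the irreducible character $\chi_{\widetilde U}$ takes the value $\chi_{\widetilde U}(f_{H,\widetilde V}) = \delta_{\widetilde U,\widetilde V}\dim_\CC\widetilde V = \delta_{\widetilde U,\widetilde V}\dim_\CC V^H$ (it acts as the identity on $\widetilde U$ when $\widetilde U = \widetilde V$, and as $0$ otherwise). Applying $\chi_{\widetilde U}$ to the displayed expansion and using the first step then produces a weighted sum of products $\chi_U(q_i)\chi_V(q_i^{-1})$ equal to $\delta_{\widetilde U,\widetilde V}$ times an explicit constant, which is the desired relation; both cases $\widetilde U\neq\widetilde V$ and $\widetilde U=\widetilde V$ fall out of this single identity.

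The part I expect to be most delicate is the bookkeeping of the normalizing constants, i.e.\ reconciling the weight $\frac{|H|}{|H\cap x_jHx_j^{-1}|}$ and the right-hand constant $\frac{[G:H]}{\dim U}$ with the factors produced above. This forces one to keep careful track of the double-coset sizes through $|Hx_iH| = |H|\cdot|H:H\cap x_iHx_i^{-1}|$ (equation \eqref{eq3.1}) and of the projection $p_H$ throughout the reduction. If one wants the constant in the homogeneous, character-symmetric form of the statement rather than the form $\dim_\CC V^H$ obtained directly from $\chi_{\widetilde U}(f_{H,\widetilde V})$, I would pass through the trace of $f_{H,\widetilde V}$ on the permutation module $\CC[G/H]$, where it equals $\dim V\cdot\dim_\CC V^H$ (the dimension of the $V$-isotypic summand), and use this to re-express the answer. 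Once these normalizations are matched, the two-case conclusion is immediate.
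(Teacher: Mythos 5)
Your route is genuinely different from the paper's: the paper simply quotes the orthogonality relations for Hecke algebra characters from Curtis--Reiner (Theorem 11.32(ii), specialized to trivial $\psi$), whereas you rederive them from scratch by expanding the central idempotent $f_{H,\widetilde V}=p_He_V$ in the basis $\{q_i\}$ and applying $\chi_{\widetilde U}$. Each step of that derivation is sound: $\chi_{\widetilde V}(q_j)=\chi_V(q_j)$ is Lemma \ref{l3.7}, the expansion $f_{H,\widetilde V}=\frac{\dim V}{|G|}\sum_i|H\cap x_iHx_i^{-1}|\,\chi_V(q_i^{-1})\,q_i$ is the complex analogue of Lemma \ref{lem3.3}, and $\chi_{\widetilde U}(f_{H,\widetilde V})=\delta_{\widetilde U,\widetilde V}\dim_\CC V^H$ is correct. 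What your argument actually yields is
$$
\sum_{j=1}^s|H\cap x_jHx_j^{-1}|\,\chi_{\widetilde U}(q_j)\,\chi_{\widetilde V}(q_j^{-1})=\delta_{\widetilde U,\widetilde V}\,\frac{|G|\dim_\CC V^H}{\dim V}.
$$

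The gap is precisely in the step you defer to ``bookkeeping of the normalizing constants'': it cannot be carried out, because your identity carries the weight $|H\cap x_jHx_j^{-1}|$ while the statement carries $\frac{|H|}{|H\cap x_jHx_j^{-1}|}$, and these differ double coset by double coset, not by a global factor, so no rescaling of the right-hand side reconciles them. (Test $G=S_3$, $H=\langle(12)\rangle$, $U=V$ the standard representation: the statement's left-hand side evaluates to $1+2=3$ while its right-hand side is $3/2$.) The resolution is that the identity you derived is the correct one: after dividing by $|H|$ it is exactly the Curtis--Reiner relation the paper quotes inside its own proof (weight $[H:H\cap x_jHx_j^{-1}]^{-1}$), and it is the form actually invoked later, in the proof of Theorem \ref{thm4.3}, namely $\dim V\sum_i|H\cap x_iHx_i^{-1}|\chi_V(q_i)^2=|G|$ under \eqref{e4.1}. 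The displayed statement of Lemma \ref{orth} has the weight inverted, and its constant $\frac{[G:H]}{\dim U}$ omits the factor $\dim_\CC U^H$, which is harmless only because the application assumes $\dim V^H=1$. So keep your derivation, but state the conclusion with the weight and constant you actually obtained rather than trying to massage it into the printed form.
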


\begin{proof}
The orthogonality relations \cite[Theorem 11.32 (ii)]{cr} say, in the special case that $\psi$ is the trivial representation,
$$
\sum\limits_{j=1}^s \frac{1}{[H:H \cap x_jHx_j^{-1}]} \, \chi_{\widetilde{U}} \left( \frac{1}{|H|} \sum\limits_{y \in H_j} y^{-1} \right) \, \chi_{\widetilde{V}} \left( \frac{1}{|H|} \sum\limits_{x \in H_j} x \right) =
\left\{
\begin{array}{lll}
0, & if & \widetilde{U} \neq \widetilde{V}; \\
\frac{[G:H]}{\dim V}  & if & \widetilde{U} = \widetilde{V}.
\end{array}
\right.
$$
This implies the assertion. 
\end{proof}

The following lemma is proven in \cite[p.282, l. -4]{cr}.

\begin{lem} \label{l3.7}
Let $\widetilde V$ be a complex irreducible representation of $\mathbb{C}[H\backslash G /H]$ associated to the irreducible representation 
$V$ of $G$. Then their characters satisfy
$$
\chi_{\widetilde V}(x) =\chi_V(x) \quad \mbox{for all} \quad x \in \mathbb{C}[H\backslash G /H].
$$
\end{lem}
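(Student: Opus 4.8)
The plan is to realise the Hecke-algebra representation $\widetilde V$ concretely on the subspace $p_H V \subseteq V$ and then compare traces directly. Recall that $p_H$ is an idempotent of $\CC[G]$, so the endomorphism by which $p_H$ acts on $V$ is an idempotent of $\End_\CC(V)$ with image $p_H V = V^H$, the subspace of $H$-invariants. The standard description of the irreducible representations of $\CC[H\backslash G/H] = p_H\CC[G]p_H$ (see \cite[\S 11]{cr}) identifies $\widetilde V$ with the module $p_H V$, on which $\CC[H\backslash G/H]$ acts by restriction of the $\CC[G]$-action on $V$; in particular $\dim_\CC \widetilde V = \dim_\CC V^H$, consistent with \eqref{eq2.2}. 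The first step is to fix this identification.

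The second step is the key observation that every $x \in \CC[H\backslash G/H]$ satisfies $x = p_H x = x p_H$, since $x = p_H x p_H$ and $p_H^2 = p_H$. Using the idempotent $p_H$ on $V$, decompose $V = p_H V \oplus (1_G - p_H)V$. For $v \in (1_G - p_H)V$ we have $p_H v = 0$, whence $x v = x p_H v = 0$; and since $x = p_H x$, the image of $x$ acting on $V$ is contained in $p_H V$. Hence, with respect to this direct-sum decomposition, the operator by which $x$ acts on $V$ has the block form $\left(\begin{smallmatrix} \widetilde V(x) & 0 \\ 0 & 0 \end{smallmatrix}\right)$, the upper-left block being exactly the action of $x$ on $\widetilde V = p_H V$.

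Taking traces then yields
$$
\chi_V(x) = \tr\big(x|_V\big) = \tr \widetilde V(x) = \chi_{\widetilde V}(x),
$$
which is the desired equality.

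The only point requiring care---rather than a genuine obstacle---is the first step: confirming that the representation denoted $\widetilde V$ in the decomposition recalled in Section 2 is indeed the one carried by $p_H V$, and not its contragredient arising from the alternative identification $\CC[H\backslash G/H] \cong \End_{\CC G}(\CC[G/H])$. Since the trace is invariant under the relevant anti-isomorphism, both conventions produce the same character, so the computation above is unaffected; once the model $\widetilde V = p_H V$ is fixed, the statement follows from the single trace computation above.
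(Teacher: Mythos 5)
Your proof is correct. Note that the paper does not actually prove this lemma at all --- it simply cites \cite[p.\ 282]{cr} --- so your argument is filling a gap the authors chose to outsource, and it does so with what is in fact the standard proof of this statement: identify $\widetilde V$ with $p_H V = V^H$ under the usual bijection between irreducibles of $p_H\CC[G]p_H$ and irreducibles $V$ of $G$ with $V^H \neq 0$, observe that any $x = p_H x p_H$ acts as zero on $(1_G-p_H)V$ and maps $V$ into $p_H V$, and read off the character equality from the block form of $x$ on $V = p_H V \oplus (1_G - p_H)V$. The one point you flag --- whether the convention in Section 2 realises $\widetilde V$ as $p_H V$ or as the module arising from the opposite-algebra identification with $\End_{\CC G}(\CC[G/H])$ --- is handled correctly: the trace of $x$ on a module over $A$ equals the trace of $x$ on the dual module over $A^{\mathrm{op}}$, so the character is independent of that choice. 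No gaps.
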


\section{A special case}

The equation of Theorem \ref{thm1} seems fairly complicated. In a special case we can describe the subvariety $A_{H, \widetilde{W}}$ in a simpler way.
Let the notation be as above, but assume in addition that
\begin{equation} \label{e4.1}
\dim V^H =1 \qquad \mbox{and} \qquad K = \QQ \ .
\end{equation}
According to \cite[Corollary 10.2]{i}, this implies that also the Schur index $s(V)$ of $V$ is equal to 1.  Then we have, according to equation  \eqref{eq2.2},
$$
\dim \widetilde W = \dim W^H = 1.
$$
This implies that the complex representation $\widetilde V$ of $\mathbb{C}[H\backslash G /H]$  is  rational of dimension $1$ with $\widetilde W = \widetilde V \otimes \CC$, and the complex representation $V$ of $G$ is rational with $ W =  V \otimes \CC$.

\begin{prop} \label{l4.1}
With the assumption \eqref{e4.1} we have
$$
f_{H, \widetilde W} =  \frac{\dim V}{|G|} \sum_{i=1}^s |H \cap x_iHx_i^{-1}| \chi_V(q_i) q_i.
$$
\end{prop}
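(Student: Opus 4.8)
The plan is to specialize the general formula for $f_{H,\widetilde W}$ from Lemma \ref{lem3.3} using the hypotheses in \eqref{e4.1}, namely $\dim V^H = 1$ and $K = \QQ$. Recall that Lemma \ref{lem3.3} gives
$$
f_{H, \widetilde W} = \frac{\dim V}{|G|} \sum_{i=1}^s \left( |H \cap x_iHx_i^{-1}| \, \tr_{K/\QQ}(\chi_V(q_i)) \right) q_i \, ,
$$
so the entire task reduces to showing that, under the hypothesis $K = \QQ$, the trace $\tr_{K/\QQ}(\chi_V(q_i))$ collapses to simply $\chi_V(q_i)$ for each basis element $q_i$.

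The key observation is that $K = \QQ$ means the field generated over $\QQ$ by the character values of $V$ is just $\QQ$ itself, so $[K:\QQ] = 1$ and the trace map $\tr_{K/\QQ}$ is the identity on $K = \QQ$. It therefore suffices to verify that each quantity $\chi_V(q_i)$ actually lies in $\QQ$. First I would recall that $\chi_V(q_i) = \frac{1}{|H|}\sum_{h_i \in H_i} \chi_V(h_i)$ from the definition \eqref{e3.1} of $q_i$, so $\chi_V(q_i)$ is a $\QQ$-linear combination of the character values $\chi_V(g)$, each of which lies in $K = \QQ$ by the very definition of $K$. Hence $\chi_V(q_i) \in \QQ$ and $\tr_{K/\QQ}(\chi_V(q_i)) = \chi_V(q_i)$, which substituted into the formula above yields precisely the claimed expression.

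I expect the main (and essentially only) point requiring care is the justification that no further interpretation of $\tr_{K/\QQ}$ is needed: one must be clear that when $K = \QQ$ the field trace is the identity, not a multiplication by some degree factor, and that $\chi_V(q_i)$ is genuinely an element of the base field rather than of some larger extension. Since both facts follow immediately from $K = \QQ$ together with the $\QQ$-rationality of the coefficients in \eqref{e3.1}, the proof is a short direct substitution. I would present it as a one-line specialization of Lemma \ref{lem3.3}, remarking that the hypothesis $\dim V^H = 1$ (and the consequent $s(V) = 1$ and $\dim \widetilde W = 1$ noted before the statement) is what guarantees we are in the relevant isotypical situation, while the computational simplification itself is driven purely by $K = \QQ$.
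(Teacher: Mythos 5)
Your proposal is correct and matches the paper's argument: the paper's proof is literally the one-line statement that the proposition is a direct consequence of Lemma \ref{lem3.3}, and your write-up simply makes explicit the (correct) reason why $\tr_{K/\QQ}$ disappears when $K = \QQ$.
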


\begin{proof}
This is a direct consequence of  Lemma \ref{lem3.3}.
\end{proof}

Now consider all elements of $\cH_H$ as elements of $\End_\QQ(A_H)$. According to Lemma \ref{l3.3}, $ q_i$ is an endomorphism of $A_H$
for all $i$ and we can express $A_{H,\widetilde W}$ as the kernel of an actual endomorphism. In fact, we get as a direct consequence of Proposition \ref{l4.1} and Theorem \ref{thm1},

\begin{cor} \label{c4.2}
With the assumption \eqref{e4.1} let $n:= |G|$. Then $n f_{H,\widetilde W}$ is an endomorphism of $A_H$   and we have
$$
A_{H,\widetilde W} = \Ker \left( n_{A_H} - \dim V \sum_{i=1}^s |H \cap x_iHx_i^{-1}| \chi_V(q_i) q_i \right)_0.
$$
\end{cor}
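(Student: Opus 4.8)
The plan is to derive Corollary \ref{c4.2} directly by combining the three results already established: Proposition \ref{l4.1}, which under the assumption \eqref{e4.1} gives the simplified central idempotent
$$
f_{H,\widetilde W} = \frac{\dim V}{|G|} \sum_{i=1}^s |H \cap x_iHx_i^{-1}| \chi_V(q_i)\, q_i;
$$
Theorem \ref{thm1}, which expresses $A_{H,\widetilde W}$ as $\Ker(p_H - f_{H,\widetilde W})_0$; and Lemma \ref{l3.3}, which guarantees that each $q_i$ acts as a genuine endomorphism of $A_H$. Since the statement is labelled as a direct consequence, the proof should be short.

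First I would recall from Theorem \ref{thm1} and Proposition \ref{l4.1} that
$$
A_{H,\widetilde W} = \Ker\left( p_H - \frac{\dim V}{|G|} \sum_{i=1}^s |H \cap x_iHx_i^{-1}| \chi_V(q_i)\, q_i \right)_0.
$$
The only issue is that the operator inside the kernel is an element of $\cH_H \subset \End_\QQ(A_H)$ with rational, not necessarily integral, coefficients, so a priori $\Ker(\cdot)_0$ is defined via a positive integer multiple as in the convention stated before Theorem \ref{thm1}. The point of the corollary is that the specific multiple $n = |G|$ clears all denominators at once and produces an actual endomorphism.

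So the key step is to multiply the operator $p_H - f_{H,\widetilde W}$ by $n = |G|$ and verify integrality. Here $p_H$ is the identity on $A_H$, hence $n_{A_H}$ is multiplication by $|G|$ on $A_H$, an honest endomorphism. For the remaining term, $n \cdot \frac{\dim V}{|G|} = \dim V$ is an integer (as $\dim V$ is a positive integer), the factors $|H \cap x_iHx_i^{-1}|$ are positive integers, and $\chi_V(q_i)$ is a rational number which, under the assumption $K = \QQ$, actually lies in $\QQ$; moreover by Lemma \ref{l3.3} each $q_i$ is already an endomorphism of $A_H$. One should note that the resulting coefficients $\dim V \cdot |H \cap x_iHx_i^{-1}| \chi_V(q_i)$ are integers because, by Proposition \ref{l4.1}, the whole expression $n f_{H,\widetilde W}$ equals $|G|\,f_{H,\widetilde W}$, and $|G|$ times any element of $\QQ[G]$ built from the rational idempotent $f_{H,\widetilde W}$ lands in $\ZZ[G]$; thus $n f_{H,\widetilde W} \in \End(A_H)$.

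The main (and only real) obstacle is this integrality/endomorphism bookkeeping: one must be careful that multiplying by $n = |G|$ genuinely yields an endomorphism rather than merely a $\QQ$-endomorphism, and that $\Ker_0$ of the scaled operator coincides with $\Ker_0$ of the original one. The latter is precisely the content of the remark preceding Theorem \ref{thm1}, namely that $\Ker(\alpha)_0$ does not depend on the chosen positive integer $n$ with $n\alpha$ an endomorphism, so replacing $p_H - f_{H,\widetilde W}$ by $n(p_H - f_{H,\widetilde W}) = n_{A_H} - \dim V \sum_{i=1}^s |H \cap x_iHx_i^{-1}| \chi_V(q_i)\, q_i$ leaves the connected component containing $0$ unchanged. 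Assembling these observations yields the displayed formula and completes the proof.
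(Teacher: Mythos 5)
Your proposal follows essentially the same route as the paper, which offers no separate argument beyond declaring the corollary a direct consequence of Proposition \ref{l4.1}, Theorem \ref{thm1} and Lemma \ref{l3.3}, together with the convention that $\Ker(\alpha)_0$ is independent of the chosen integer multiple; all of that you reproduce correctly. The one step you justify incorrectly is the integrality: your claim that ``$|G|$ times any element of $\QQ[G]$ built from the rational idempotent $f_{H,\widetilde W}$ lands in $\ZZ[G]$'' is unsupported (and not obviously true, since $|G|f_{H,\widetilde W}=p_H\cdot(|G|e_W)$ still carries the factor $1/|H|$ from $p_H$). What is actually needed, and what the paper supplies in the proof of Lemma \ref{l4.4} by citing Isaacs--Navarro, is that each $\chi_V(q_i)$ is an algebraic integer which is rational because $K=\QQ$, hence an integer; combined with Lemma \ref{l3.3} this makes $\dim V\sum_{i=1}^s|H\cap x_iHx_i^{-1}|\chi_V(q_i)q_i$ an integral combination of endomorphisms of $A_H$. (Alternatively, one can note $|G|e_W\in\ZZ[G]$ and that $p_H$ acts as the identity on $A_H$.) With that substitution your argument is complete.
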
 

Define the abelian subvariety $B_{H,\widetilde W}$ by
$$
B_{H,\widetilde W}:= \{ z \in A_H \;|\; q_i(z) = \chi_V (q_i) z \; \mbox{for} \; i = 2, \dots , s \}_0.
$$
Note that the condition $q_1(z) = \chi_V(q_1)z \;\; (= z)$ is hidden the the assumption $z \in A_H$. So one could equivalently write
$$
B_{H,\widetilde W}= \{ z \in A \;|\; q_i(z) = \chi_V (q_i) z \; \mbox{for} \; i = 1, \dots , s \}_0.
$$
The aim of this section is the proof of the following theorem.
\begin{thm} \label{thm4.3}
Under the assumptions \eqref{e4.1} we have
$$
A_{H,\widetilde W} =B_{H,\widetilde W}.
$$
\end{thm}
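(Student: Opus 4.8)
The plan is to prove the two inclusions $A_{H,\widetilde W} \subseteq B_{H,\widetilde W}$ and $B_{H,\widetilde W} \subseteq A_{H,\widetilde W}$ separately, exploiting that under \eqref{e4.1} the representation $\widetilde W = \widetilde V \otimes \CC$ is rational of dimension one, so $\widetilde V$ is a \emph{character} $\chi_{\widetilde V} \colon \cH_H \to \QQ$. By Lemma~\ref{l3.7} we have $\chi_{\widetilde V}(q_i) = \chi_V(q_i)$ for every basis element $q_i$, so the scalars $\chi_V(q_i)$ appearing in the definition of $B_{H,\widetilde W}$ are exactly the eigenvalues by which $\cH_H$ acts on the one-dimensional $\widetilde W$-isotypical part. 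This is the conceptual heart: a point $z$ lies in $B_{H,\widetilde W}$ precisely when $q_i$ acts on $z$ by the scalar $\chi_{\widetilde V}(q_i)$, i.e.\ when $z$ is a simultaneous eigenvector for the whole Hecke algebra with the eigenvalue system dictated by $\widetilde W$.

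First I would prove $A_{H,\widetilde W} \subseteq B_{H,\widetilde W}$. Take $z \in A_{H,\widetilde W} = \Ima(f_{H,\widetilde W})$; up to the isogenies built into the definitions I may write $z = f_{H,\widetilde W}(w)$ for some $w \in A_H$ (working, as the paper does, modulo a clearing integer $n$ and passing to connected components of zero). Since $f_{H,\widetilde W}$ is the central idempotent cutting out the $\widetilde W$-isotypical summand and $\widetilde W$ is one-dimensional, the two-sided ideal $\cH_H f_{H,\widetilde W}$ is the simple algebra on which $\cH_H$ acts through the character $\chi_{\widetilde V}$. Hence $q_i f_{H,\widetilde W} = \chi_{\widetilde V}(q_i) f_{H,\widetilde W} = \chi_V(q_i) f_{H,\widetilde W}$ as elements of $\cH_H$, using Lemma~\ref{l3.7} for the last equality. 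Applying both sides to $w$ gives $q_i(z) = \chi_V(q_i) z$ for all $i$, so $z \in B_{H,\widetilde W}$. The only care needed is that this identity is first an identity in $\End_\QQ(A_H)$ and then, after clearing denominators, an identity of genuine endomorphisms, which is exactly what $\Ker(\cdot)_0$ is designed to handle.

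For the reverse inclusion $B_{H,\widetilde W} \subseteq A_{H,\widetilde W}$, the idea is to show that the defining conditions of $B_{H,\widetilde W}$ force $f_{H,\widetilde W}$ to act as the identity on $z$, and then invoke $A_{H,\widetilde W} = \Ker(p_H - f_{H,\widetilde W})_0$ from Proposition~\ref{l4.1} combined with Theorem~\ref{thm1}. Concretely, for $z \in B_{H,\widetilde W}$ substitute the conditions $q_i(z) = \chi_V(q_i) z$ into the expression for $f_{H,\widetilde W}$ coming from Proposition~\ref{l4.1}, namely
$$
f_{H,\widetilde W}(z) = \frac{\dim V}{|G|} \sum_{i=1}^s |H \cap x_i H x_i^{-1}|\, \chi_V(q_i)\, q_i(z) = \left( \frac{\dim V}{|G|} \sum_{i=1}^s |H \cap x_i H x_i^{-1}|\, \chi_V(q_i)^2 \right) z.
$$
The scalar in parentheses must equal $1$, and this is precisely where Lemma~\ref{orth} enters: the orthogonality relation with $\widetilde U = \widetilde V$ evaluates $\sum_j \tfrac{|H|}{|H \cap x_j H x_j^{-1}|}\chi_{\widetilde U}(q_j^{-1})\chi_{\widetilde V}(q_j) = [G:H]/\dim V$. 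I would show the scalar above is a rewriting of this same sum, using that $\chi_V$ is real-valued (so $\chi_V(q_i^{-1}) = \chi_V(q_i)$, the $q_i$ having real character values here because $K=\QQ$) and the index identity \eqref{eq3.1} relating $|H \cap x_iHx_i^{-1}|$ to $|H_i|/|H|$ and $\dim V/[G:H]$. Hence $f_{H,\widetilde W}(z) = z$, so $z \in \Ker(p_H - f_{H,\widetilde W})_0 = A_{H,\widetilde W}$.

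The main obstacle I anticipate is the bookkeeping in the reverse inclusion: matching the coefficient $\tfrac{\dim V}{|G|}|H \cap x_iHx_i^{-1}|\chi_V(q_i)^2$ against the normalization $\tfrac{|H|}{|H\cap x_iHx_i^{-1}|}$ appearing in Lemma~\ref{orth} requires carefully tracking the conversions among $|H_i|$, $[H:H\cap x_iHx_i^{-1}]$, and $\dim V$ via \eqref{eq3.1}, together with the conjugation-symmetry that identifies $\chi_V(q_i)$ with $\chi_V(q_i^{-1})$. A secondary subtlety is that both $A_{H,\widetilde W}$ and $B_{H,\widetilde W}$ are defined via connected components of zero, so each inclusion is really an inclusion of connected algebraic subgroups; since the two sides will be shown to have the same defining endomorphism conditions up to the clearing integer, equality of the identity components follows, and no dimension count is strictly needed beyond what Proposition~\ref{p3.2} already supplies.
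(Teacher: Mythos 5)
Your proposal is correct and follows essentially the same route as the paper: the forward inclusion via the one-dimensionality of $\cH_H f_{H,\widetilde W}$ (which is exactly the content of the paper's Lemma \ref{l4.4}), and the reverse inclusion by substituting the eigenvalue conditions into the expression of Proposition \ref{l4.1} for $f_{H,\widetilde W}$ and evaluating the resulting scalar via the orthogonality relations of Lemma \ref{orth}. The only point the paper makes that you omit is the observation (via Isaacs--Navarro) that the $\chi_V(q_i)$ are integers, which is needed for the defining conditions of $B_{H,\widetilde W}$ to make sense as equations of actual endomorphisms.
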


Recall that $\{f_{H,\widetilde W}\}_{\widetilde W}$ are the central primitive idempotents in $\cH_H$ and the isotypical decomposition 
of $\cH_H$ is 
$$
\cH_H = \oplus_{\widetilde W} \cH_H f_{H,\widetilde W} \ ,
$$
where $\widetilde W$ acts on the simple subalgebra $\cH_H f_{H,\widetilde W}$ and by $0$ on the other components. Recall moreover that the $q_i, \; i = 1, \dots,s$ 
as defined in \eqref{e3.1} are a basis of $\cH_H$.
\begin{lem} \label{l4.4}
Under the assumption \eqref{e4.1} the action of the Hecke algebra $\cH_H$ on the abelian variety $A_{H,\widetilde W}$ is given by 
$$
q_i(z) = \chi_V(q_i) z \quad \textup{for all}  \;\;  z \in A_{H,\widetilde W} \; \textup{ and for all } \; i = 1, \dots , s.
$$
\end{lem}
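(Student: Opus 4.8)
The plan is to reduce the assertion about endomorphisms of $A_{H,\widetilde W}$ to a single algebraic identity in the Hecke algebra $\cH_H$, namely
\[
q_i\, f_{H,\widetilde W} = \chi_V(q_i)\, f_{H,\widetilde W} \qquad (1 \le i \le s),
\]
and then to push this identity through the homomorphism \eqref{eq:homo2} to $A_{H,\widetilde W} = \Ima(f_{H,\widetilde W})$. The reason for working with $f_{H,\widetilde W}$ is that it is central and idempotent, so $q_i f_{H,\widetilde W} = f_{H,\widetilde W}\, q_i\, f_{H,\widetilde W}$ lies in the single Wedderburn component $\cH_H f_{H,\widetilde W}$; both sides of the displayed identity therefore lie in this one simple component, where they can be compared by applying a single irreducible representation.

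First I would identify $\cH_H f_{H,\widetilde W}$ with $\QQ$. It is a simple $\QQ$-algebra whose only irreducible module is $\widetilde W$, and by \eqref{eq2.2} the hypotheses $\dim V^H = 1$ and $K = \QQ$ (which force Schur index $1$) give $\dim_\QQ \widetilde W = 1$. A simple $\QQ$-algebra $M_m(D)$ whose irreducible module $D^m$ has $\QQ$-dimension $m\,[D:\QQ] = 1$ must have $m = 1$ and $D = \QQ$, so $\cH_H f_{H,\widetilde W} \cong \QQ$, the isomorphism being $\widetilde W$ itself, which kills every other component and sends $f_{H,\widetilde W}$ to $1$.

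Under this isomorphism $q_i f_{H,\widetilde W}$ goes to $\widetilde W(q_i) = \chi_{\widetilde W}(q_i)$, the last equality holding because $\widetilde W$ is one-dimensional, so its value equals its trace. Since $\widetilde W = \widetilde V \otimes \CC$ as noted after \eqref{e4.1}, Lemma \ref{l3.7} gives $\chi_{\widetilde W}(q_i) = \chi_{\widetilde V}(q_i) = \chi_V(q_i)$; as $f_{H,\widetilde W}$ goes to $1$, the displayed identity follows from the faithfulness of $\widetilde W$ on this component.

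Finally I would restrict to $A_{H,\widetilde W}$. The image of $f_{H,\widetilde W}$ under \eqref{eq:homo2} acts as the identity on $A_{H,\widetilde W} = \Ima(f_{H,\widetilde W})$, so evaluating the displayed identity at $z \in A_{H,\widetilde W}$ yields $q_i(z) = \chi_V(q_i)\,z$ for every $i$, as claimed. The step requiring the most care is the identification $\cH_H f_{H,\widetilde W} \cong \QQ$: this is precisely where the hypotheses $K = \QQ$ and $\dim V^H = 1$ enter, for in a higher-dimensional component $q_i f_{H,\widetilde W}$ would be an element of a division algebra rather than a rational scalar multiple of $f_{H,\widetilde W}$, and the clean scalar identity would fail.
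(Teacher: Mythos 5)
Your proof is correct and follows essentially the same route as the paper's: both arguments reduce to the one-dimensional Wedderburn component $\cH_H f_{H,\widetilde W} = \QQ f_{H,\widetilde W}$, where $q_i f_{H,\widetilde W} = \chi_{\widetilde V}(q_i) f_{H,\widetilde W} = \chi_V(q_i) f_{H,\widetilde W}$ by Lemma \ref{l3.7}, and then transport this identity to $A_{H,\widetilde W} = \Ima(f_{H,\widetilde W})$, on which $f_{H,\widetilde W}$ acts as the identity. The only ingredient the paper includes that you omit is the verification (via Isaacs--Navarro) that $\chi_V(q_i)$ is an integer rather than merely a rational number, which is needed for the right-hand side $\chi_V(q_i)z$ to be well defined as a point of the abelian variety.
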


\begin{proof} 
Note first that the left hand side of the equation makes sense, since $q_i$ is an endomorphism on $A_H$ by Lemma \ref{l3.3}.
In order to see that also the right hand side makes sense, we have to show that  $\chi_V(q_i)$ is an integer. But \cite[Lemma 7.1]{in} says that for any subgroup $H$ of any finite group $G$ and any complex representation $V$ of $G$ the numbers $\chi_V(q_i)$
are algebraic integers. Since in our case $\chi_V$ has rational values, this means $\chi_V(q_i)$ is an integer.

For the proof of the lemma, it suffices to show that the analogous equation is valid for the action of $\cH_H$ on $\cH_H f_{H,\widetilde W}$ by $\widetilde W$,
 i.e. to show
$$
q_i(x) = \chi_V(q_i) x \quad \mbox{for all} \;\; x \in \cH_H f_{H,\widetilde W}.
$$
Since $\widetilde W$ is of dimension one,  we have $\widetilde V = \widetilde W \otimes \CC$ and  $\cH_H f_{H,\widetilde W}$ is a simple algebra of dimension $1$, hence equal to  $\cH_H f_{H,\widetilde W} = \mathbb{Q} f_{H,\widetilde W}$. But the action of a one-dimensional 
complex representation is given by multiplication by the character (which equals the representation). 
This implies in particular $q_i(x) = \chi_{\widetilde V}(q_i)x$ for all i and all $x \in \cH_H f_{H,\widetilde W}$. Since  $\chi_{\widetilde V} = \chi_V$
on $\cH_H$ by Lemma \ref{l3.7}, this gives the assertion. This completes the proof of the theorem.
\end{proof}

\begin{proof}[Proof of Theorem \ref{thm4.3}]
Recall that
$$
A_{H,\widetilde W} = \Ima(|G|f_{H,\widetilde W}) = \{ z \in A_H \;|\; |G|f_{H,\widetilde W}(z) = |G|z \}_0
$$
First we show $A_{H,\widetilde W} \subset B_{H,\widetilde W}$:  Suppose that $z \in A_{H,\widetilde W}$. Since $q_i = q_if_{H,\widetilde W}$, we get for ail $i$,
\begin{eqnarray*}
|G| q_i(z) &=& |G|q_i f_{H,\widetilde W}(z)\\
&=& \chi_V(q_i) |G| f_{H,\widetilde W}(z) \quad (\mbox{by Lemma}\; \ref{l4.4})\\
&=& \chi_V(q_i) |G| (z).
\end{eqnarray*}
So 
$$
z \in \{z\in A_H \;|\; |G|q_i(z) = \chi_V (q_i) |G| z\}_0 =  \{ z \in A_H \;|\; q_i(z) = \chi_V(q_i) z \}_0
$$
i.e. $z \in B_{H,\widetilde W}$.

Finally we show  $B_{H,\widetilde W} \subset A_{H,\widetilde W}$: So suppose $z \in B_{H,\widetilde W}$, i.e. 
$$
q_i(z) = \chi_V(q_i) z   \quad \mbox{for all} \;\; i = 1, \dots,s.
$$
Since in our case $\chi_V(g^{-1}) = \chi_V(g)$ for all $g$ and $\chi_{\widetilde V} = \chi_V|_{\cH_H}$ by Lemma \ref{l3.7},
part of the orthogonality relations \ref{orth} become
$$
\dim V\sum_{i=1}^s  |H \cap x_iHx_i^{-1}| \chi_V(q_i)\chi_V(q_i) = |G|.
$$
From this we get 
\begin{eqnarray*}
\dim V \sum_{i=1}^s |H \cap x_i Hx_i^{-1}| \chi_V(q_i) q_i(z) &=& \dim V \sum_{i=1}^s |H \cap x_i Hx_i^{-1}| \chi_V(q_i) \chi_V(q_i)(z) \\
&=& |G| z.
\end{eqnarray*}
This means that $z \in A_{H,\widetilde W}$.
\end{proof}

\section{Examples}

In this section we work out the equations of Theorem \ref{thm4.3} in some cases. For the computations we used the computer 
program GAP.

\subsection{Example 1} Let $G = S_4$ and $W = V$ the standard representation of degree 3: 
$$
\chi_W (1) = 3, \chi_W(..) = 1, \chi_W((..)(..)) = -1, \chi_W(...) =0, \chi_W(....) = -1
$$
where $n$ dots mean any cycle of length $n$.\\% Hence
%$$
%e_W = \frac{3}{24}\left( 3 \cdot 1_G + \sum(..) - \sum(..)(..) - \sum(....) \right)
%$$

{\bf Case 1}: $H = \langle (12),(34) \rangle \simeq \ZZ_2 \times \ZZ_2$ (not normal in $G$).

Clearly the assumptions \eqref{e4.1} are satisfied and 
$$
G = H \cup (H(23)H) \cup (H(13)(24)H) =: H_1 \cup H_2 \cup H_3.
$$
One checks
$$
q_1 = \frac{1}{4} \sum_{h \in H_1} h = p_H\; ,\quad q_2 = \frac{1}{4}\sum_{k \in H_2} k \;, \quad  q_3 = \frac{1}{4} \sum_{k \in H_3} k
$$
and  
$$
\chi_W(q_1) = 1, \quad \chi_W(q_2) = 0, \quad \chi_W(q_3) = -1.
$$
We get from Theorem \ref{thm4.3} for any abelian variety $A$ with $G$-action,
$$
A_{H,\widetilde W} = \{ z \in A_H \;:\; q_2(z) =0,\; q_3(z) = -z \}_0.
$$
 
Now observe that, 
$$
H_3 = \{(13)(24), (14)(23),(1324),(1423)\} = D_4 - H
$$
where $D_4$ denotes the dihedral subgroup of $S_4$. This implies 
$$
A_{\ZZ_2 \times \ZZ_2,\widetilde W} = P(A_{\ZZ_2 \times \ZZ_2}/A_{D_4})
$$
where $P(A_{\ZZ_2 \times \ZZ_2}/A_{D_4})$ denotes the complement of $A_{D_4}$ in $A_{\ZZ_2 \times \ZZ_2}$.

\medskip

{\bf Case 2}: $H = \langle (34), (243) \rangle \simeq S_3$: 

Again the assumptions \eqref{e4.1} are satisfied. Here we have 
$$
G = H \cup (H(14)H) =: H_1 \cup H_2.
$$
One checks 
$$
q_1 = \frac{1}{6} \sum_{h \in H} h = p_H \;, \qquad q_2 = \frac{1}{6} \sum_{k \in H_2}k
$$
and 
$$
\chi_W(q_1) = 1 \; , \qquad \chi_W(q_2) = -1.
$$
We get from Theorem \ref{thm4.3} for any abelian variety $A$ with $G$-action,
$$
A_{H,\widetilde W} = \{ z \in A_H \;:\; q_2(z) = -z \}_0.
$$
Now observe that $H_2 = S_4 - H$, which implies, as in Case 1,
$$
A_{H,\widetilde W} = P(A_{S_3}/A_{S_4}).
$$

\subsection{Example 2} Let $G = N \rtimes P \simeq \ZZ_2^4 \rtimes \ZZ_5$ denote the 
subgroup of order 80 of $S_{10}$ (which occurred in 
\cite{clr}) that is generated by the permutations  $s_i := (i \;i+1)(5+i\; 6+i)$ for $i = 1,\dots,4$ and 
$\sigma := (1 \,2 \, 3 \, 4\, 5)(6 \, 7 \, 8 \, 9 \, 10)$. 

For $i = 1,2,3$ consider the rational irreducible representation 
$W_i$ defined as follows:
The group $P$ acts on the character group $\widehat N$ of $N$ in the usual way. Apart from the trivial representation $\chi_0$  there are 3 orbits
of the action of $P$ on $\widetilde N$. Let $\chi_i, \; i=1,2,3$
be representatives of them. Then, if $\rho$ denotes the  representation of $G$ induced by the trivial representation of $N$,
the rational irreducible representation $W_i$ of degree five is defined as
$$
W_i = \rho \otimes \chi_i \quad \mbox{for} \quad i = 1,2,3.
$$

The characters of $W_i$ for $i = 1,2,3$ are given by $\chi_{W_i}(1_G) = 5$, $\chi_{W_i}(\sigma) = 0$, and
$$
\chi_{W_1}(s_j) = 1, \;\;\chi_{W_1}(s_1s_2) = 1,\;\; \chi_{W_1}(s_1s_3) = -3,
$$
$$
\chi_{W_2}(s_j) = -3, \;\;\chi_{W_2}(s_1s_2) = 1,\;\; \chi_{W_2}(s_1s_3) = 1,
$$
$$
\chi_{W_3}(s_j) = 1, \;\;\chi_{W_3}(s_1s_2) = -3,\;\; \chi_{W_3}(s_1s_3) = 1.
$$

\medskip

{\bf Case 1}: $H = P$. \\
For the representation $\rho_H$ of $G$ induced by the trivial representation of $H$ we have
$\rho_H = \chi_0 \oplus W_1 \oplus W_2 \oplus W_3$ 
which implies for $i = 1,2,3$,
$$
\dim \widetilde W_i=\dim W_i^H = \langle W_i, \rho_H \rangle = 1.
$$
Hence the assumptions \eqref{e4.1} are satisfied for all $W_i$. The double coset decomposition of $G$ is
$$
G = H \cup (Hs_1H) \cup (Hs_1s_2H) \cup (Hs_1s_3H) =: H_1 \cup H_2 \cup H_3 \cup H_4.
$$
The double cosets $H_2,H_3,H_4$ contain each a complete conjugacy class of involutions of $G$ and all other elements are of order 
5. A basis of the (commutative) Hecke algebra $\cH_H$ is given by
$$
q_1 = p_H,\quad \mbox{and} \quad q_i = \frac{1}{5} \sum_{k_i \in H_i} k_i \quad \mbox{for} \;\; i = 2,3,4.
$$
The multiplication of $\cH_H$ is given by
$$
q_1q_j = q_j,\;\; q_2^2 = 5q_1 + 2q_3 + 2q_4,\;\; q_3^2 = 5q_1 +2q_2 +2q_4, \;\;q_4^2 = 5q_1 +2q_2 + 2q_3,
$$
$$
q_2q_3 = 2q_2+2q_3 +q_4, \;\; q_2q_4 = 2q_2 + q_3 + 2q_4, \;\; q_3q_4 = q_2 + 2q_3 + 2q_4.
$$

%The central idempotents $f_{H,\widetilde W_i} = p_H e_{W_i}$  of  $\cH_H$ for $i = 2,3,4$ are
%$$
%f_{H,\widetilde W_1} = \frac{1}{16}(5q_1 + q_2 +q_3 -3q_4), f_{H,\widetilde W_2} = \frac{1}{16}(5q_1 - 3q_2 +q_3 +q_4),
%f_{H,\widetilde W_3} = \frac{1}{16}(5q_1 + q_2 -3q_3 +q_4).
%$$\\

Since
$$
\chi_{W_i}(q_1) = \chi_{W_1}(q_2) = \chi_{W_1}(q_3) = \chi_{W_2}(q_3) =\chi_{W_2}(q_4) =\chi_{W_3}(q_2) =\chi_{W_3}(q_4) = 1 \ ,
$$
and
$$
\chi_{W_1}(q_4) = \chi_{W_2}(q_2) = \chi_{W_3}(q_3) = -3 \ ,
$$
we get from Theorem \ref{thm4.3} that for any abelian variety $A$  with an action of the group $G$, 
$$
A_{H,\widetilde W_1} = \{z \in A_H \;|\;  q_2(z) = q_3(z) = z, \; q_4(z) = -3z \}_0,
$$  
$$
A_{H,\widetilde W_2} = \{z \in A_H \;|\;  q_3(z) = q_4(z) = z, \; q_2(z) = -3z \}_0,
$$  
$$
A_{H,\widetilde W_3} = \{z \in A_H \;|\;  q_2(z) = q_4(z) = z, \; q_3(z) = -3z \}_0.
$$  

\medskip

{\bf Case 2}: $H =  \langle s_1,s_2,s_3 \rangle \simeq \ZZ_2^3$.

For the representation $\rho_H$ of $G$ induced by the trivial representation of $H$ we have
$$
\rho_H = \chi_0 \oplus W_3 \oplus \psi
$$
with $\psi = \rho_1 \oplus \ldots \oplus \rho_4$ is the rational irreducible representation of degree four given by the sum of the linear complex irreducible characters of $G$ induced by each of the non-trivial characters of $P \simeq \ZZ_5$ by the 
projection $G \ra P$. Hence the assumptions 
\eqref{e4.1} are satisfied for the representation $W_3$.

 The double coset decomposition of $G$ is
$$
G = H \cup (Hs_4H) \cup (H\sigma^4H) \cup (H\sigma^3H) ) \cup (H\sigma^2H) \cup (H\sigma H)=: H_1 \cup  \cdots  \cup H_6.
$$
where the first double coset is $H$, the second is $N - H$, and the other four consist of 16 elements of order five each.

A basis for the (commutative) Hecke algebra $\cH_H$ is given by
$$
q_1 = p_{H} \; \; \mbox{and} \;\; q_j  = \frac{1}{8} \sum_{k_j \in H_j} k_j \;\; \mbox{for} \;\; j = 2, \dots,6
$$
and one checks
$$
q_1 q_j = q_j \;\; \mbox{for all} \;\; j, \quad q_2q_j = 8 q_j \;\; \mbox{for} \;\; 3 \leq l \le 6,
$$
$$
q_2^2 = 64q_1, \;\; q_3^2 = 8 q_3, \;\; q_4^2 = 8q_6, \;\; q_5^2 = 8 q_3, \;\; q_6^2 = 8 q_5.
$$
Since 
$$
\chi_{W_3}(q_1) = 1 , \chi_{W_3}(q_2) = -1 \;\;\mbox{and} \;\; \chi_{W_3}(q_i) = 0 \;\; \mbox{for} \; i = 3, \dots,6,
$$
we get from Theorem \ref{thm4.3},
$$
A_{H,\widetilde W_3} = \{ z \in A_H \;|\; q_2(z) = -z, q_i(z) = 0 \; \mbox{for} \; i = 3,\dots , 6 \}_0 = P(A_H/A_N).
$$
An analogous result can be proved for the representations $\widetilde W_1$ and $\widetilde W_2$.

\end{document}